\newtheorem{thm}{Theorem}[section]
\newtheorem{lm}[thm]{Lemma}
\newtheorem{prop}[thm]{Proposition}
\newtheorem{cor}[thm]{Corollary}
\newtheorem*{thm*}{Theorem}
\theoremstyle{definition}
\newtheorem{defn}[thm]{Definition}
\newtheorem{cons}[thm]{Construction}
\newtheorem{nota}[thm]{Notation}
\newtheorem{ex}[thm]{Example}
\newtheorem{rmk}[thm]{Remark}
\newtheorem{ques}[thm]{Question}
\newtheorem{obs}[thm]{Observation}
\newtheorem*{rmk*}{Remark}
\newtheorem{thmx}{Theorem}
\newcommand{\tel}{\mathrm{tel}}
\newcommand{\trans}{\mathrm{trans}}
\newcommand{\Proj}{\mathrm{Proj}}
\newcommand{\cat}{\mathbf}
\newcommand{\on}{\operatorname}
\newcommand{\id}{\mathrm{id}}
\newcommand{\map}{\on{Map}}
\newcommand{\Map}{\on{Map}}
\newcommand{\N}{\mathbb N}
\newcommand{\Z}{\mathbb Z}
\newcommand{\Sph}{\mathbb S}
\newcommand{\CMon}{\mathrm{CMon}}
\newcommand{\Ss}{\cat S}
\newcommand{\Fin}{\mathrm{Fin}}
\newcommand{\Mod}{\cat{Mod}}
\newcommand{\End}{\mathrm{End}}
\newcommand{\pt}{\mathrm{pt}}
\newcommand{\gp}{\mathrm{gp}}
\newcommand{\colim}{\mathrm{colim}}
\newcommand{\coCAlg}{\mathrm{coCAlg}}
\newcommand{\Spec}{\mathrm{Spec\,}}
\newcommand{\gr}{\mathrm{gr}}
\newcommand{\F}{\mathrm{Fin}^\simeq}
\DeclareFontFamily{U}{min}{}
\DeclareFontShape{U}{min}{m}{n}{<-> udmj30}{}
\newcommand{\category}{\xspace{$\infty$-category}\xspace}
\newcommand{\categories}{\xspace{$\infty$-categories}\xspace}
\let\c@equation\c@thm
\title{$p$-perfection and group completion of $\mathbb{E}_\infty$-monoids}
\author{Maxime Ramzi}
\address{FB Mathematik und Informatik \\
Universit\"at M\"unster \\
Germany} 
\email{\href{mailto:mramzi@uni-muenster.de}{mramzi@uni-muenster.de}}
\urladdr{\url{https://sites.google.com/view/maxime-ramzi-en/home}}
\author{Maria Yakerson}
\address{CNRS \& IMJ-PRG\\
Paris\\
France}
\email{\href{mailto:yakerson@imj-prg.fr}{yakerson@imj-prg.fr}}
\urladdr{\url{https://www.muramatik.com}}
\date{\today}
\begin{document}

\begin{abstract}
We study $\mathbb E_\infty$-monoids on which a prime $p$ acts invertibly, which we call $p$-perfect, in the non-group-complete situation. In particular, we prove that in many examples, they almost embed in their group-completion.

We further study the $p$-perfection functor, and describe it in terms of Quillen's $+$-construction, similarly to group-completion. This gives an alternative description of the $p$-inverted higher algebraic $K$-theory of a ring.
\end{abstract}

\maketitle

\parskip 0.2cm

\parskip 0pt
\tableofcontents

\parskip 0.2cm
\vspace{-2em}

\section*{Introduction}
In homotopy theory, many constructions which classically appear rather harmless, such as group completion of commutative monoids, become much more complicated. For example, the sphere spectrum is the (homotopical) group completion of the groupoid of finite sets, and while the latter is an ordinary $1$-groupoid, the former has nontrivial homotopy groups in arbitrarily high positive degrees. 

Thus, it is somewhat of a miracle that on connective spectra\footnote{This is true more generally for spectra, with the same proof, but we will later focus on non-grouplike commutative monoids.}, also known as grouplike $\mathbb E_\infty$-monoids, or grouplike commutative monoids, the process of inverting a prime $p$ or a set of primes $S$ is rather harmless: it is given by a simple telescope construction $$\colim(X\xrightarrow{p}X\xrightarrow{p}X\to \dots).$$
In particular, the homotopy groups of $X[\frac{1}{p}]$ are simply those of $X$ where we formally invert $p$ in the classical sense (see e.g. \cite[Theorem 17]{thomas}).

However, the proof of this fact truly uses the grouplike-ness of spectra, and more specifically, the fact that their underlying spaces are simple. This paper stems from the observation that this miracle fails away from the grouplike situation --- it fails already for the groupoid of finite sets $\F$, as we explain in detail in \Cref{prop:counterex} and \Cref{rmk:counterexfurther}. 

Our goal in this paper is to study (not-necessarily-grouplike) commutative monoids in the $\infty$-category of spaces, aka $\mathbb E_\infty$-monoids, on which a prime $p$ acts invertibly. We call such commutative monoids \textit{$p$-perfect}, and we also study the \textit{$p$-perfection} functor $(-)[\frac{1}{p}]$.  

The first thing we prove is some kind of elementary structure theorem that allows us to understand the failure of the telescope construction a bit better --- the following is \Cref{thm:simple}:
\begin{thmx}
    Let $M$ be a $p$-perfect commutative monoid. Then:
    \begin{enumerate}
        \item $M$ is a simple space, that is, $\pi_1(M,m)$ is abelian for every $m \in M$, and its action on $\pi_n(M,m)$ is trivial for all $n\geq 1$; 
        \item $\pi_n(M,m)$ is uniquely $p$-divisible for all $n\geq 1$. 
    \end{enumerate}
\end{thmx}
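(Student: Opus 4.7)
The strategy is to handle the identity basepoint first, then extend to general basepoints by combining the $p$-th power equivalence $\psi_p \colon M \to M$ with translations.

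At the identity, the component $M_e \subset M$ is a sub-$\mathbb E_\infty$-monoid (closed under multiplication since $e\cdot e = e$); as $\pi_0(M_e)$ is a point, $M_e$ is grouplike, hence a connective spectrum. Any connective spectrum is simple, giving (1) at $e$. The restriction $\psi_p|_{M_e}$ is still an equivalence, and by Eckmann--Hilton (at the basepoint $e$, loop-concatenation and the $\mathbb E_\infty$-product on $\pi_n$ coincide) this self-map induces multiplication by $p$ on each $\pi_n(M,e)$. Being an isomorphism, this establishes that $\pi_n(M,e)$ is uniquely $p$-divisible for $n\geq 1$, giving (2) at $e$.

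For a general basepoint $m \in M$, I would aim to show that the translation $T_m \colon M_e \to M_m$, $x \mapsto x\cdot m$, is a homotopy equivalence; both (1) and (2) then transfer immediately from $M_e$ to $M_m$. The obstacle is that $m$ is generally not invertible in $M$, which is non-grouplike. The key tool is the commutative square
\[
\begin{tikzcd}
M_e \ar[r, "T_m"] \ar[d, "\psi_p"'] & M_m \ar[d, "\psi_p"] \\
M_e \ar[r, "T_{m^p}"] & M_{m^p}
\end{tikzcd}
\]
which on $\pi_n$ becomes $\psi_{p,*}\circ T_{m,*} = T_{m^p,*}\circ (p\cdot)$. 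Since $\psi_{p,*}$ and multiplication by $p$ on $\pi_n(M,e)$ are both isomorphisms (the latter from the identity-basepoint case), $T_{m,*}$ is an iso iff $T_{m^p,*}$ is. Using that $\psi_p$ is an equivalence on all of $M$, every $m$ admits a canonical $p^k$-th root $m^{1/p^k}\in M$, and one may iteratively replace $m$ by $m^{1/p^k}$ and attempt to conclude through a limit/stabilization argument.

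The main obstacle will be making this iterative argument rigorous, since translations by different $p^k$-th roots of $m$ are a priori independent pieces of data. A conceptually cleaner alternative is to pass through the group completion: since $(-)[\tfrac{1}{p}]$ is a left adjoint, $M^\gp$ is itself a $p$-perfect connective spectrum, all of whose components are equivalent to the simple, uniquely $p$-divisible space $(M^\gp)_0$. It would then suffice to show that $M_m \to (M^\gp)_{[m]}$ is an equivalence for every $m\in M$ --- the component-wise ``almost-embedding'' in the group completion foreshadowed in the abstract. I expect this step to be the crucial technical ingredient of the proof.
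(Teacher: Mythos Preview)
Your treatment of the identity component is correct: $M_0$ is a grouplike sub-$\mathbb E_\infty$-monoid, hence a connective spectrum, and the Frobenius restricts to multiplication by $p$ on its homotopy groups.

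The extension to general basepoints, however, has a genuine gap that is not merely a matter of rigor. Your strategy hinges on showing that the translation $T_m\colon M_0\to M_m$ is a weak equivalence, but this is simply \emph{false} in general. Take $M=\Fin^\simeq[\tfrac{1}{p}]$: the unit $0$ is isolated (as it is in $\Fin^\simeq$, and this persists after $p$-perfection since $(-)_+$ and $(-)[\tfrac{1}{p}]$ commute), so $M_0\simeq \pt$; yet by the paper's comparison with group completion, the nonzero components of $M$ are equivalent to components of $\mathbb S[\tfrac{1}{p}]$, which have nontrivial higher homotopy. Thus $T_m$ is not an equivalence for $m\neq 0$, and no amount of iterating $p$-th roots will change this. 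Your commutative square only shows that $T_{m,*}$ is an isomorphism iff $T_{m^p,*}$ is, which gives no anchor to start an induction.

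The alternative route through group completion is both circular and too strong. The paper's comparison theorem (that $M_m\simeq (M^\gp)_{[m]}$ for $m\neq 0$, under a local-monogenicity hypothesis) is proved \emph{using} the present theorem: one needs $M$ to be simple to invoke the telescope description of $M[-x]$. Moreover, as the example above shows, the comparison genuinely fails at the zero component.

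The paper's approach avoids reducing to the identity entirely. It works directly at each basepoint $m$ by composing the multiplication $(M,m)^p\xrightarrow{\mu}(M,pm)$ with the inverse Frobenius $(M,pm)\xrightarrow{\simeq}(M,m)$ to obtain a map $\rho\colon\pi_n(M,m)^p\to\pi_n(M,m)$ satisfying $\rho(g,\dots,g)=g$ and $\rho\circ\sigma=k_\sigma\cdot\rho$ for each $\sigma\in\Sigma_p$ (conjugation for $n=1$, the $\pi_1$-action for $n\geq 2$). A short purely algebraic argument then shows that any group with this extra structure is abelian and uniquely $p$-divisible, and that the $\pi_1$-action on higher $\pi_n$ is trivial. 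The key point is that this structure exists at \emph{every} component, with no need to transport anything from the unit.
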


With this in hand, the similarity with group-complete commutative monoids becomes somewhat striking, and in fact we are able to compare $p$-perfection of a commutative monoid and the $p$-perfection of its group completion in an \textit{a priori} rather surprising way. We refer to \Cref{def:locmon} for the definition of ``locally monogenic'' --- we simply point out here that for $R$ a commutative ring with connected $\Spec(R)$, the groupoid of finitely generated projective $R$-modules $\Proj(R)^\simeq$ is locally monogenic, and so is $\F$. With this definition, the following is \Cref{thm:group completion}:
\begin{thmx}
    Let $M$ be a locally monogenic commutative monoid. Consider the pullback square of commutative monoids, induced by the group completion map $M \to M^\gp$:
    
   \[\begin{tikzcd}
	N & M^\gp[\frac{1}{p}]  \\
\pi_0(M)[\frac{1}{p}] & \pi_0(M)^\gp[\frac{1}{p}] 
	\arrow[from=1-1, to=1-2]
	\arrow[from=1-1, to=2-1]
	\arrow[from=1-2, to=2-2]
	\arrow[from=2-1, to=2-2]
\end{tikzcd}\]

Then the natural map $M[\frac{1}{p}] \to N$ is an isomorphism on $\pi_0$ and an equivalence at all components except 0.
\end{thmx}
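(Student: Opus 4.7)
My plan is to verify the equivalence on $\pi_0$ and on each nonzero component separately. The natural comparison map $M[\frac{1}{p}] \to N$ is induced by the universal property of the pullback from the two maps $M[\frac{1}{p}] \to M^\gp[\frac{1}{p}]$ and $M[\frac{1}{p}] \to \pi_0(M)[\frac{1}{p}]$.

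For the $\pi_0$ comparison: $p$-perfection commutes with $\pi_0$ for commutative monoids (at the level of $\pi_0$, $p$-perfection reduces to classical localization, which agrees with $\pi_0$ of the monoidal $p$-perfection since the latter is $p$-divisible by Theorem A and has the correct universal property), so $\pi_0(M[\frac{1}{p}]) = \pi_0(M)[\frac{1}{p}]$. Taking $\pi_0$ of the defining pullback square yields $\pi_0(N) = \pi_0(M)[\frac{1}{p}] \times_{\pi_0(M)^\gp[\frac{1}{p}]} \pi_0(M)^\gp[\frac{1}{p}] = \pi_0(M)[\frac{1}{p}]$, and the induced map is the identity.

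For the nonzero components, the pullback construction identifies the component of $N$ over any $c \in \pi_0(M)[\frac{1}{p}]$ with the component of $M^\gp[\frac{1}{p}]$ over the image $\bar c$. So the task reduces to showing that $(M[\frac{1}{p}])_c \to (M^\gp[\frac{1}{p}])_{\bar c}$ is an equivalence for every $c \neq 0$. My approach would be to invoke local monogenicity to reduce to the case of monogenic $M$, and then exploit the telescope presentation $M^\gp \simeq \colim(M \xrightarrow{\cdot m} M \to \cdots)$ for a generator $m$: this writes $(M^\gp[\frac{1}{p}])_{\bar c}$ as a colimit of shifted components $(M[\frac{1}{p}])_{c + n[m]}$, and on each nonzero component Theorem A tells us we are dealing with a simple space with uniquely $p$-divisible homotopy. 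The hope is that multiplication by $m$ becomes an equivalence between nonzero components in a cofinal range, so the colimit collapses onto the starting term.

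The main obstacle is the failure, highlighted in the introduction, of the naive telescope $\colim(X \xrightarrow{p} X \to \cdots)$ to compute $p$-perfection in the non-grouplike setting. Consequently, one cannot simply interchange the telescope presentation of the group completion with the $p$-perfection functor. The core of the argument must instead leverage Theorem A (simplicity and unique $p$-divisibility) together with the local monogenicity hypothesis to control multiplication by $m$ on the relevant nonzero components, ensuring that the component-wise equivalence with $(M^\gp[\frac{1}{p}])_{\bar c}$ goes through while still allowing the $\pi_0$ discrepancy to be corrected by the pullback with $\pi_0(M)[\frac{1}{p}]$.
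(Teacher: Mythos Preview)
Your skeleton matches the paper's: reduce to the $p$-perfect monoid $M' := M[\frac{1}{p}]$, use local monogenicity so that $(M')^\gp \simeq M'[-x]$ for any nonzero $x$, and then compare components via the telescope $\tel^x(M')$. The $\pi_0$ part is fine.

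The genuine gap is your ``hope'' that multiplication by a nonzero element becomes an equivalence between nonzero components of $M'$. You never explain \emph{why} this should hold; Theorem~A gives you simplicity and unique $p$-divisibility of each $\pi_n(M',x)$, but those properties alone do not force the translation map $+x:(M',y)\to (M',y+x)$ to be a $\pi_*$-isomorphism. The paper supplies exactly this missing step: for $M'$ $p$-perfect, the Frobenius $(M',x)\xrightarrow{\Delta}(M',x)^p\xrightarrow{\mu}(M',px)$ is an equivalence, and on $\pi_n$ it is computed (via \cite[Proposition~2.9]{ben}) to be $g\mapsto p\cdot \trans_{(p-1)x}(g)$. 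Since multiplication by $p$ on $\pi_n(M',x)$ is already an isomorphism by unique $p$-divisibility, the translation $\trans_{(p-1)x}$ is forced to be an isomorphism on every $\pi_n$. Noting $\tel^x(M')\simeq \tel^{(p-1)x}(M')$, this collapses the telescope and gives the desired componentwise equivalence $(M',x)\simeq ((M')^\gp,x)$ for every nonzero $x$.

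A minor remark: there is no need to ``reduce to monogenic $M$''. One works directly with $M[\frac{1}{p}]$ and uses that it inherits local monogenicity (so inverting any nonzero element already gives the group completion), together with the fact that $(-)^\gp$ and $(-)[\frac{1}{p}]$ commute.
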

Finally, again by analogy with group-completion, rather than try to compute the $p$-perfection functor on the nose (which, by the above, may be as hard as group completion!), we give a way to describe its homology, analogously to the group-completion theorem. In the following statement, the $+$-superscript refers to Quillen's $+$-construction (see \cite{hoyois+} for a modern account), and $\tel_p(M)$ refers to the telescope construction $\colim(M\xrightarrow{p}M\xrightarrow{p}M\to \dots)$. With this notation, the following is \Cref{thm:plus}: 
\begin{thmx}
      Let $M$ be a commutative monoid. There are canonical maps of commutative monoids $$\tel_p(M)^+\to \tel_p(M^+)\to M[\frac{1}{p}]$$ that are both equivalences. 
\end{thmx}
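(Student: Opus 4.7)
The plan is to prove the second equivalence first by a universal property argument, and then to deduce the first equivalence from it.

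For the map $\tel_p(M^+)\to M[\frac{1}{p}]$, I will check that $\tel_p(M^+)$ satisfies the universal property of $M[\frac{1}{p}]$ as the initial $p$-perfect commutative monoid under $M$. Fix a $p$-perfect commutative monoid $N$. Since the $p$-th power map on an $E_\infty$-monoid is itself a morphism of commutative monoids, pre-composition $p^*$ with $p\colon M^+\to M^+$ on mapping spaces agrees with post-composition by $p\colon N\to N$, which is an equivalence because $N$ is $p$-perfect. Hence
\[\Map_{\CMon}(\tel_p(M^+), N)\simeq \lim\bigl(\Map_{\CMon}(M^+, N)\xleftarrow{p^*}\Map_{\CMon}(M^+, N)\xleftarrow{p^*}\cdots\bigr)\simeq \Map_{\CMon}(M^+, N).\]
By Theorem~A, $N$ is simple, hence hypoabelian, and therefore local for the plus construction; by its universal property (see \cite{hoyois+}), the acyclic unit $M\to M^+$ then induces an equivalence $\Map_{\CMon}(M^+, N)\simeq \Map_{\CMon}(M, N)$. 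Combining these gives $\tel_p(M^+)\simeq M[\frac{1}{p}]$ via the canonical map.

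For the map $\tel_p(M)^+\to \tel_p(M^+)$, first observe that the unit $M\to M^+$ is acyclic; since integral homology commutes with sequential colimits, the induced map $\tel_p(M)\to \tel_p(M^+)$ is also acyclic. On the other hand, by the previous paragraph $\tel_p(M^+)\simeq M[\frac{1}{p}]$ is simple by Theorem~A, hence hypoabelian, and in particular plus-local, so $\tel_p(M^+)^+\simeq \tel_p(M^+)$. Applying $(-)^+$ to the acyclic map above then yields an equivalence $\tel_p(M)^+\xrightarrow{\sim}\tel_p(M^+)^+\simeq \tel_p(M^+)$, as required. The main conceptual input to both parts is Theorem~A, which converts $p$-perfection into simpleness and thereby unlocks the universal property of Quillen's plus construction; the main technical hurdle is setting up $(-)^+$ as a well-behaved localization on commutative monoids with the expected universal property, which is the content of \cite{hoyois+}.
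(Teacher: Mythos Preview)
Your argument for the first equivalence $\tel_p(M)^+\to \tel_p(M^+)$ matches the paper's exactly. However, there is a genuine gap in your argument for the second equivalence $\tel_p(M^+)\to M[\frac{1}{p}]$.

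You verify that for every $p$-perfect $N$, the canonical map $M\to \tel_p(M^+)$ induces an equivalence $\Map_{\CMon}(\tel_p(M^+),N)\simeq \Map_{\CMon}(M,N)$. But this alone only says that $\tel_p(M^+)$ and $M[\frac{1}{p}]$ corepresent the same functor on $\CMon[\frac{1}{p}]$, i.e.\ that $\tel_p(M^+)[\frac{1}{p}]\simeq M[\frac{1}{p}]$. To conclude via Yoneda that $\tel_p(M^+)\simeq M[\frac{1}{p}]$ you must know that $\tel_p(M^+)$ already lies in $\CMon[\frac{1}{p}]$, that is, that $\tel_p(M^+)$ is $p$-perfect. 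This is not automatic: recall that $\tel_p(\F)$ fails to be $p$-perfect (\Cref{prop:counterex}), so the mapping-space computation you carry out holds for $\tel_p(\F)$ as well, yet $\tel_p(\F)\not\simeq \F[\frac{1}{p}]$. In other words, the argument you wrote is exactly the proof of \Cref{prop: sigma^3}(\ref{item: tel p-perfect}), which has the hypothesis ``$\tel_p(M)$ is $p$-perfect'' that you have not verified.

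The paper closes this gap by invoking \Cref{prop: sigma^3}(2): since $M^+$ has hypoabelian fundamental groups at every basepoint by construction, $\tel_p(M^+)$ is $p$-perfect. With this one extra sentence your proof becomes correct, and is then essentially the same as the paper's (the paper phrases the second equivalence as ``$(-)^+$ and $(-)[\frac{1}{p}]$ commute as left adjoints of commuting right adjoints'' together with \Cref{prop: sigma^3}, but this unwinds to the same universal property computation you gave).
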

In particular, this theorem provides a homology isomorphism between $\tel_p(M)$ and $M[\frac{1}{p}]$. In \Cref{thm:homology}, we give a second proof of this homology isomorphism with $\mathbb F_p$-coefficients using the ``bialgebra Frobenius'' on $H_*(M;\mathbb F_p)$. 

This theorem also suggests a comparison with a slightly different formal inversion of $p$,  namely the one where we view $M$ as a module (in spaces) over the groupoid of finite sets equipped with the \emph{cartesian} product and invert $p$ as a module map. One can also imagine remembering only the module structure over the full subgroupoid spanned by $p$. Informally speaking, this means that we only remember how to add ``elements'' in $M$ to themselves a number of times, or a $p$-power number of times respectively, and invert $p$ in that context. 
Surprisingly, all these three inversions give the same result, as we prove in \Cref{cor:agreemarc}. 
\begin{rmk*}
    We note that while we use the symbol $p$, $p$ does not need to be a prime, it simply needs to be a natural number $\geq 2$. Not only are the proofs the same, but the relevant results also follow from the case of a prime. 
\end{rmk*}
\subsection*{Linear overview}
The early sections match the order in which we stated our main results: \Cref{sec:pperf} deals with generalities about $p$-perfect commutative monoids and there we prove \Cref{thm:simple}; \Cref{sec:group} is about the proof of \Cref{thm:group completion} and \Cref{section:+-construction} deals with the proof of \Cref{thm:plus}. \Cref{section:module} is concerned with the comparison of $p$-perfection with another formal inversion of $p$, and there we prove \Cref{cor:agreemarc}. We added an extra section, \Cref{section: bialgebra}, which gives a different proof of a special case of \Cref{thm:plus}, because we thought the proof technique worth recording\footnote{It also happens that this proof was obtained earlier than \Cref{thm:plus}.}.

\subsection*{Related work}
This work arose partly from a discussion with Ben Antieau about his work in~\cite[Section 2.2]{ben}. In the meantime, he obtained a different proof of \Cref{thm:homology} (which also implies \Cref{cor:homologypperf}), cf. \cite[Corollary 2.10]{ben}.
\subsection*{Notation}
We freely use the language of \categories as developed by Lurie in \cite{HTT,HA}. 

$\Ss$ denotes the \category of spaces. $\CMon$ denotes the \category of commutative monoids in $\Ss$, also known as $\mathbb E_\infty$-monoids. Every monoid $M \in \CMon$ has canonical Frobenius map $p\colon M\to M$.
$\CMon[\frac{1}{p}]$ denotes the full subcategory of monoids $M$ such that Frobenius map $p\colon M\to M$ is an equivalence. The inclusion $\CMon[\frac{1}{p}] \hookrightarrow \CMon$ admits a left adjoint, which we denote $M\mapsto M[\frac{1}{p}]$. We denote mapping spaces by $\Map$.

$\F$ denotes the groupoid of finite sets, which we will consider as a commutative monoid under disjoint union. 
$\Proj(R)^\simeq$ denotes the groupoid of finitely generated projective $R$-modules, considered as a commutative monoid under direct sum. 

\subsection*{Acknowledgements}
We are indebted to Ben Antieau for helpful discussions, and for his suggestion to investigate \Cref{thm:group completion} and \Cref{thm:plus}; as well as to Marc Hoyois for a discussion that led to \Cref{lm:prod} and eventually to \Cref{cor:agreemarc}.

We are also grateful to Andrea Bianchi for helpful comments, and a discussion of the group $A_{p^3}$.

Maxime Ramzi is funded by the Deutsche Forschungsgemeinschaft (DFG, German Research Foundation) -- Project-ID 427320536 -- SFB 1442, as well as by Germany's Excellence Strategy EXC 2044 390685587, Mathematics Münster: Dynamics--Geometry--Structure, and in the beginning stages of the research presented here was supported by the Danish National Research Foundation through the Copenhagen Centre for Geometry and Topology (DNRF151). 

Maria Yakerson is grateful to CNRS and Institut de Mathématiques de Jussieu --- Paris Rive Gauche for perfect working conditions which allowed to pursue this project.

Finally, we are grateful to the Max Planck Institute for Mathematics in Bonn for its hospitality during the workshop on Dualisable Categories \& Continuous $K$-theory, during which we were able to discuss the contents of this article with Ben Antieau. 
\section{Homotopy groups of $p$-perfect monoids}\label{sec:pperf}

\begin{defn}
Let $M$ be a commutative monoid. $M$ is called \textit{$p$-perfect} if it belongs to $\CMon[\frac{1}{p}]$, i.e., if the Frobenius map $p:M\to M$ is an equivalence.
\end{defn}

In this section, we analyze the structure of homotopy groups of $p$-perfect commutative monoids. The main result of this section is the following theorem.

\begin{thm}\label{thm:simple}
    Let $M$ be a $p$-perfect commutative monoid. Then:
    \begin{enumerate}
        \item $M$ is a simple space, that is, $\pi_1(M,m)$ is abelian for every $m \in M$, and its action on $\pi_n(M,m)$ is trivial for all $n\geq 1$; 
        \item $\pi_n(M,m)$ is uniquely $p$-divisible for all $n\geq 1$. 
    \end{enumerate}
\end{thm}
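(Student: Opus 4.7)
The plan is to leverage the $\mathbb{E}_\infty$-structure to factor the Frobenius map on homotopy groups through a translation, and then exploit the resulting identity using $p$-perfection. Fix $m \in M$, write $\mu_p\colon M^{\times p} \to M$ for the $p$-fold multiplication, and $\tau_x\colon M \to M$, $y \mapsto y+x$, for the translation by $x$. The composition $\mu_p \circ \Delta \colon M \to M^{\times p} \to M$ is exactly the Frobenius $p$. At the basepoint $(m,\dots,m) \mapsto pm$, the induced map $\mu_{p*}\colon \pi_n(M,m)^p \to \pi_n(M,pm)$ is a group homomorphism. Using commutativity of $M$, applying it to the tuple with $\gamma$ in slot $i$ and the identity elsewhere yields $\tau_{(p-1)m}(\gamma)$, so applying it to the diagonal gives the key identity
\[ p_*(\gamma) = \tau_{(p-1)m}(\gamma)^p \qquad \text{in } \pi_n(M, pm). \]
Moreover, since distinct slots have disjoint support in $\pi_1(M,m)^p$, their images under $\mu_{p*}$ pairwise commute in $\pi_1(M,pm)$.

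I would next deduce commutativity of $\pi_1$. Because $M$ is $p$-perfect, $p_*$ is surjective, so every element of $\pi_1(M,pm)$ is a $p$-th power of some $\tau_{(p-1)m}(\gamma)$, hence lies in the abelian subgroup generated by the image of $\tau_{(p-1)m}$. Thus $\pi_1(M,pm)$ is abelian, and since $p$ is bijective on $\pi_0(M)$ every component is of the form $pm$ for some $m$, yielding (1a). With $\pi_1$ abelian, the identity above becomes $p_* = p \cdot \tau_{(p-1)m}$ additively in every degree $n \geq 1$. Injectivity of $p_*$ forces $\tau_{(p-1)m}$ to be injective on $\pi_n$, and surjectivity of $p_*$ forces it to be surjective, because every $\beta \in \pi_n(M,pm)$ is of the form $p \cdot \tau_{(p-1)m}(\alpha) = \tau_{(p-1)m}(p\alpha)$. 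Thus $\tau_{(p-1)m}$ is bijective on $\pi_n$, and consequently so is $p \cdot = p_* \circ \tau_{(p-1)m}^{-1}$, giving (2).

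For the trivial action (1b), naturality of $\mu_{p*}$ applied to the $\pi_1$-action on $\pi_n$ shows that $(\gamma, e, \ldots, e)$ acts trivially on $(e, \alpha, e, \ldots, e)$ componentwise in the $\pi_1(M^p)$-action on $\pi_n(M^p)$, so after pushing forward, $\tau_{(p-1)m}(\gamma)$ acts trivially on $\tau_{(p-1)m}(\alpha)$ in $\pi_n(M, pm)$. Since $\tau_{(p-1)m}$ is bijective on both $\pi_1$ and $\pi_n$, the whole of $\pi_1(M,pm)$ acts trivially on all of $\pi_n(M,pm)$.

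The main obstacle I anticipate is not deep but conceptual: one must recognize that the identity $p_* = \tau_{(p-1)m}(-)^p$, which a priori only expresses every element of $\pi_*(M,pm)$ as a $p$-th power of something in the image of $\tau_{(p-1)m}$, in fact forces $\tau_{(p-1)m}$ itself to be a bijection, because a $p$-th power of a subgroup element still lies in that subgroup. Once this is observed, all three conclusions fall out quickly from the same Eckmann--Hilton-style input.
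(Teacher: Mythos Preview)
Your overall strategy matches the paper's: both exploit the multiplication map $\mu_p\colon (M,m)^p \to (M,pm)$ together with an Eckmann--Hilton-type argument on homotopy groups. The paper composes with the inverse Frobenius to obtain a self-map $\rho\colon G^p\to G$ with $\rho\circ\Delta=\id$, whereas you work directly with $\mu_{p*}$ and the bijection $p_*$; this difference is cosmetic.

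There is, however, a genuine gap in your derivation of the key identity $p_*(\gamma)=\tau_{(p-1)m}(\gamma)^p$. You assert that $\mu_{p*}$ applied to a tuple with $\gamma$ in slot $i$ and the identity elsewhere yields $\tau_{(p-1)m}(\gamma)$, ``using commutativity of $M$''. But the commutativity homotopy $\mu_p\circ\sigma\simeq\mu_p$ for $\sigma\in\Sigma_p$ is \emph{not} a pointed homotopy: evaluated at the diagonal point $(m,\dots,m)$ it traces out a loop $k_\sigma\in\pi_1(M,pm)$ (the power operation applied to $m$). Consequently the slot-$i$ and slot-$j$ inclusions induce maps on $\pi_n$ that differ by conjugation by $k_\sigma$ (for $n=1$) or by the $\pi_1$-action of $k_\sigma$ (for $n\geq 2$), not by the identity. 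Until you know these $k_\sigma$ act trivially, the different slots do \emph{not} all give $\tau_{(p-1)m}$, your identity is unjustified, and the argument that the image of $\tau_{(p-1)m}$ consists of pairwise commuting elements breaks down.

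The paper isolates exactly this point (its condition (2) in the algebraic lemmas) and resolves it by evaluating on a diagonal tuple: from $\mu_{p*}\circ\sigma = k_\sigma\,\mu_{p*}\,k_\sigma^{-1}$ and $\mu_{p*}(g,\dots,g)=p_*(g)$ one gets $p_*(g)=k_\sigma\,p_*(g)\,k_\sigma^{-1}$, and since $p_*$ is surjective, $k_\sigma$ is central. Only after this step do the slot inclusions agree on $\pi_*$ and your identity follows; the rest of your argument then goes through. You should insert this observation before claiming the identity, and likewise in your argument for the trivial $\pi_1$-action, where the same issue reappears.
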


Before proving \Cref{thm:simple}, we would like to point out two differences with the grouplike case that are at the heart of our difficulties. First, not all components of a commutative monoid $M$ are the same, and in particular we may not reduce to the component of $0\in~M$. Specifically, the Frobenius map typically goes from $(M,m)$ to $(M, pm)$ and is thus not an endomorphism of pointed spaces, which complicates $\pi_*$-considerations. Second, for essentially the same reason, $M$ itself need not be a simple space: its fundamental groups can be arbitrarily noncommutative (to wit, $\F$ has $\Sigma_n$ as the fundamental group at the basepoint $n$), and they can act non-trivially on higher homotopy groups.

The idea to prove \Cref{thm:simple} is to consider the following map of pointed spaces: \[(M,m)^p\xrightarrow{\mu} (M,pm) \xrightarrow{\simeq} (M,m)\] where the first map is the addition, and the second map is the inverse of the Frobenius map (which is an equivalence by the assumption that $M$ is $p$-perfect). 

    This induces, for $n \geqslant 1$, a family of maps
   \begin{equation}\label{eq: rho_n}
 \rho_n:~\pi_n(M,m)^p\to~\pi_n(M,m)
\end{equation}
  satisfying certain conditions coming from properties of $M$, thus providing an extra algebraic structure on these homotopy groups. The general claim is that this algebraic structure is enough to guarantee the conclusions of \Cref{thm:simple}.

    First, we will analyse the extra structure on the fundamental groups of $M$. 

    \begin{prop}\label{prop: rho}
        Let $G$ be a group with a group morphism $\rho: G^p\to G$ such that: 
        \begin{enumerate}
            \item For all $g\in G$, $\rho(g,...,g) = g$; 
            \item For any permutation $\sigma\in\Sigma_p$, there is a fixed $k_\sigma\in G$ such that $\rho\circ \sigma = k_\sigma \rho k_\sigma^{-1}.$
        \end{enumerate}

        In this case, $G$ is abelian and uniquely $p$-divisible. Furthermore, the map $\rho_1$ defined in (\ref{eq: rho_n}) satisfies these two conditions with $G=\pi_1(M,m)$. 
    \end{prop}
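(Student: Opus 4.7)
My plan is to first extract a strong simplification of condition (2). Evaluating it on the constant tuple $(g,\dots,g)$ and using condition (1), I get
\[
g = \rho(g,\dots,g) = \rho\circ\sigma(g,\dots,g) = k_\sigma\, g\, k_\sigma^{-1},
\]
so each $k_\sigma$ lies in $Z(G)$. Consequently, condition (2) collapses to honest $\Sigma_p$-invariance of $\rho$: $\rho\circ\sigma=\rho$ for every $\sigma\in\Sigma_p$.

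With this in hand, I would decompose $\rho$ via the slot maps $\rho_i(g):=\rho(1,\dots,1,g,1,\dots,1)$ (with $g$ in the $i$-th slot). The slot-wise inclusions $G\hookrightarrow G^p$ land in pairwise commuting subgroups, so since $\rho$ is a group homomorphism we have $\rho(g_1,\dots,g_p) = \rho_1(g_1)\cdots\rho_p(g_p)$. Applying the $\Sigma_p$-invariance to tuples supported in a single slot gives $\rho_i=\rho_j$ for all $i,j$; call the common map $f\colon G\to G$. Applying it to tuples supported in two slots gives $f(g)f(h)=f(h)f(g)$, so $f(G)$ is abelian. Condition (1) then reads $f(g)^p=g$; writing $g = f(g)^p = f(g^p)$ exhibits every $g\in G$ as lying in $f(G)$, so $G=f(G)$ is abelian. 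In the abelian $G$, the identities $f(g)^p = g$ and $f(g^p)=f(g)^p = g$ say exactly that multiplication by $p$ and $f$ are two-sided inverses, so $p\colon G\to G$ is a bijection and $G$ is uniquely $p$-divisible.

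For the last sentence of the proposition, I verify the two conditions for $\rho_1\colon\pi_1(M,m)^p\to\pi_1(M,m)$ induced by the composite in (\ref{eq: rho_n}). Condition (1) is immediate: $\mu\circ\Delta\colon (M,m)\to (M,pm)$ is the Frobenius, and the second factor of the composite is its chosen pointed inverse. For condition (2), the addition $\mu$ is strictly $\Sigma_p$-invariant (symmetry of addition) and strictly pointed; the inverse equivalence is only a pointed homotopy inverse, so $\rho\circ\sigma$ and $\rho$ agree up to a free (not necessarily pointed) homotopy, which produces the required $k_\sigma\in\pi_1(M,m)$ by the standard fact that two freely homotopic pointed maps induce $\pi_1$-maps differing by an inner automorphism.

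The main obstacle is really the very first observation — that each $k_\sigma$ must be central. Spotting this is what collapses the opaque condition (2) into honest symmetry; after that, the algebraic manipulations with the slot maps $\rho_i$ and the map $f$ are essentially formal.
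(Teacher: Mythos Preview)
Your algebraic argument is correct and matches the paper's approach exactly: both pivot on the observation that evaluating condition (2) on diagonal tuples forces each $k_\sigma$ to be central, after which $\rho$ is honestly $\Sigma_p$-invariant and the slot-map manipulations go through. Your treatment is slightly more explicit in naming $f$ and arguing $G=f(G)$, but the substance is identical.

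One small correction in your verification of condition (2) for $\pi_1(M,m)$: the source of the unpointed homotopy is not that the Frobenius inverse is ``only a pointed homotopy inverse.'' If $\mu$ were strictly $\Sigma_p$-invariant and $\phi^{-1}$ a pointed map, then $\rho\circ\sigma=\phi^{-1}\circ\mu\circ\sigma=\phi^{-1}\circ\mu=\rho$ already as pointed maps, and no $k_\sigma$ would appear. The actual point, as the paper notes, is that the commutativity of $\mu$ itself is only a homotopy $\mu\circ\sigma\simeq\mu$, and this homotopy need not be pointed (its value at the basepoint $(m,\dots,m)$ is a power operation on $m$). This is what produces the conjugating element $k_\sigma$.
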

    \begin{proof}
        Let us first justify that $\rho_1:\pi_1(M,m)^p\to \pi_1(M,m)$ satisfies these two conditions. The first one is evident, as the composite $(M,m)\xrightarrow{\Delta}(M,m)^p\xrightarrow{\mu} (M,pm)\xrightarrow{\simeq}(M,m)$ is by definition the composition of an equivalence and its inverse, i.e., the identity. 

        For the second one, this comes from the fact that the addition  on $M$ is homotopy commutative. Note that the homotopy commutativity of the addition does not give pointed homotopies, it only gives homotopies, hence the need for conjugation by $k_\sigma$. In fact, for a general $M$, $k_\sigma$ can be viewed as a power operation applied to $m$.

        Now let us prove the algebraic claim. Specializing the second condition to diagonal elements, and using the first condition, we find that for any $g$ and $\sigma$, $$g=\rho(g,...,g)= \rho\circ \sigma(g,...,g) = k_\sigma \rho(g,...,g) k_\sigma^{-1} = k_\sigma g k_\sigma^{-1}.$$ Hence $k_\sigma$ is central, and so the second condition simplifies to $\rho\circ \sigma = \rho$. 

    In particular, we find: $$g=\rho(g,...,g) = \rho(g,1,...,1)\rho(1,g,1,...,1)... \rho(1,...,1,g) = \rho(g,1,...,1)^p$$ so that $G$ is indeed $p$-divisible. Moreover since $ g \mapsto \rho(g,1,...,1)$ is a group homomorphism, $G$ is uniquely $p$-divisible. We also find $gh= \rho(g,1,...,1)^p \rho(1,h,1,...)^p$, and since $(g,1,...,1)$ and $(1,h,1,...)$ commute in $G^p$, we get that $gh= hg$ and we deduce that $G$ is abelian.

    \end{proof}
   
    \begin{rmk}
    Note that under the conditions of \Cref{prop: rho} we get that $\rho: G^p\to G$ is simply equal to $\frac{1}{p}\sum_{i=1}^p$. 
    \end{rmk}
    
    We can then argue similarly for higher homotopy groups. The maps \[\rho_n:~\pi_n(M,m)^p \to~\pi_n(M,m),\] defined in (\ref{eq: rho_n}), have the same first property as in \Cref{prop: rho}, and in the second property the conjugation by $k_\sigma$ is replaced by the \emph{action} of $k_\sigma\in\pi_1(M,m)$ on $\pi_n(M,m)$ which we denote as $\rho_n\circ \sigma = k_\sigma\cdot \rho_n$. To sum up, we want to prove the following statement.
    
    \begin{prop}\label{prop: rho_n}
        Let $G$ be a group acting on an abelian group $A$. Assume there is a morphism $\rho: A^p\to A$ of abelian groups such that: 
         \begin{enumerate}
            \item For all $a\in A$, $\rho(a,...,a) = a$; 
            \item For any permutation $\sigma\in\Sigma_p$, there is a fixed $k_\sigma\in G$ such that $\rho\circ \sigma = k_\sigma \cdot\rho $.
        \end{enumerate}
        In this case, $A$ is uniquely $p$-divisible. If, furthermore, $G$ is abelian, uniquely $p$-divisible, and the map $\rho: A^p \to A$ is compatible with the action of $G^p$ (where $G^p$ acts on $A$ via $G^p \xrightarrow{\frac{1}{p}\sum_{i=1}^p}G$), then the action of $G$ on $A$ is trivial.

        Furthermore, each map $\rho_n$ defined in (\ref{eq: rho_n})  provides the group $A=\pi_n(M,m)$ with this structure, where  $G=\pi_1(M,m)$ with the standard action of $G$ on $A$.
    \end{prop}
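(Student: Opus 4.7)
The plan is to closely mirror the argument of \Cref{prop: rho}, peeling off the structure one step at a time, and then verify that the maps $\rho_n$ really satisfy the listed conditions.

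First, to prove unique $p$-divisibility of $A$, I would repeat the diagonal trick: specializing condition~(2) to the diagonal and using condition~(1) gives $a = \rho(a,\dots,a) = k_\sigma \cdot \rho(a,\dots,a) = k_\sigma \cdot a$ for every $a \in A$ and $\sigma \in \Sigma_p$. So every $k_\sigma$ acts trivially on $A$, and condition~(2) simplifies to $\rho \circ \sigma = \rho$. Setting $\phi(a) := \rho(a,0,\dots,0)$, the additivity of $\rho$ together with $\Sigma_p$-invariance then yields
\[ a = \rho(a,\dots,a) = \sum_{i=1}^{p} \rho(0,\dots,0,a,0,\dots,0) = p\,\phi(a), \]
where in the $i$-th summand $a$ appears in the $i$-th slot. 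So $\phi$ is a right inverse to multiplication by $p$ on $A$, and applying the homomorphism $\phi$ to $pa=0$ gives $a = \phi(pa) = 0$, proving that multiplication by $p$ is also injective.

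Next, under the additional hypotheses of the second assertion, I would use the $G^p$-equivariance of $\rho$ on a carefully chosen tuple. Testing equivariance with $(1,g,1,\dots,1) \in G^p$ acting on $(a,0,\dots,0) \in A^p$, the source is fixed (since $g$ fixes $0$), while the image of $(1,g,1,\dots,1)$ in $G$ is the unique $p$-th root $g^{1/p}$. Equivariance then forces $\phi(a) = g^{1/p}\cdot\phi(a)$. Unique $p$-divisibility of $G$ makes $g \mapsto g^{1/p}$ a bijection of $G$, and $\phi$ is a bijection of $A$ by the previous step; together these force the $G$-action on $A$ to be trivial.

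Finally, I would verify that each $\rho_n$ carries this structure. Conditions~(1) and~(2) are established exactly as in \Cref{prop: rho}: the first comes from $\Delta$ being a section of the Frobenius, and the second from the homotopy commutativity of addition, where the conjugation by $k_\sigma \in \pi_1(M,m)$ that appeared in the $\pi_1$ case is replaced by the standard $\pi_1$-action on $\pi_n(M,m)$. For the $G^p$-compatibility in part~(2), the essential input is that $\mu : (M,m)^{\times p} \to (M,pm)$ is a based map, so on $\pi_n$ it is equivariant over the induced map $\pi_1(M,m)^p \to \pi_1(M,pm)$, which is addition; composing with the inverse Frobenius identifies the target action with that of $\rho_1 : \pi_1(M,m)^p \to \pi_1(M,m)$, and \Cref{prop: rho} identifies $\rho_1$ with $\tfrac{1}{p}\sum_{i=1}^{p}$. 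I expect the main obstacle to be this last step: checking carefully that the $\pi_1$-equivariance of the addition map on $\pi_n$ is exactly the action prescribed in part~(2). This is essentially formal once spelled out, but it is the only point where the full commutative monoid structure on $M$ (and not just $p$-perfection of its underlying space) enters the treatment of the higher homotopy groups.
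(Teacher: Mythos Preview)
Your proof is correct and follows essentially the same route as the paper: the diagonal trick to kill the $k_\sigma$-action and deduce unique $p$-divisibility, followed by testing $G^p$-equivariance on a tuple with disjoint supports (you use $(1,g,1,\dots,1)$ on $(a,0,\dots,0)$ where the paper uses $(g,1,\dots,1)$ on $(0,\dots,0,a)$, which is the same computation up to relabelling). Your verification that $\rho_n$ carries the required $G^p$-equivariance is slightly more explicit than the paper's, but the content is identical.
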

    \begin{proof}
       The proof that homotopy groups of $M$ have this extra structure with these properties is exactly the same as in the proof of \Cref{prop: rho}. 

       For the algebraic fact, we may again plug in a diagonal element, to find that $$a= \rho(a,...,a) = \rho\circ \sigma(a,...,a) = k_\sigma \cdot \rho (a,...,a) = k_\sigma \cdot a$$ from which it follows that $k_\sigma$ acts trivially on $\pi_n(M,m)$. So we have the same argument for unique $p$-divisibility as in \Cref{prop: rho}. 

Now for the trivial action, we note that $\rho: A^p \to A$ must also be $\frac{1}{p}\Sigma_{i=1}^p$ by the previous computation. So the equivariance condition for $\rho$ reads as follows: for $\underline g\in G^p, \underline a\in A^p$, $$\frac{1}{p}\sum_{i=1}^p( g_i\cdot a_i)= (\frac{1}{p}\sum_i g_i)\cdot (\frac{1}{p}\sum_i a_i).$$
Putting all of the $g_i$'s except $g_1$ to be trivial, and all of the $a_i$'s except for $a_p$ to be trivial, we find $\frac{1}{p}a_p= (\frac{1}{p}g_1)(\frac{1}{p}a_p)$. Multiplying this by $p$, we find that $\frac{1}{p}g_1$ acts trivially on $a_p$. However both $g_1$ and $a_p$ were chosen to be arbitrary, so the action is indeed trivial. 
    \end{proof}
\begin{proof}[Proof of \Cref{thm:simple}]
   For a commutative monoid $M$, combine \Cref{prop: rho} and \Cref{prop: rho_n} to find that all the actions of $\pi_1(M,m)$ on $\pi_n(M,m)$ are trivial for all basepoints $m\in M$, and all $\pi_n(M,m)$ are uniquely $p$-divisible. 
\end{proof}
\begin{cor}\label{cor:homologypperf}
    Let $M$ be a $p$-perfect commutative monoid. The map $M\to\pi_0(M)$ is an $\mathbb F_p$-homology isomorphism. 
\end{cor}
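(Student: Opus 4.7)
The plan is to decompose $M$ into connected components and show that each component has trivial reduced $\mathbb F_p$-homology, so that $H_*(M;\mathbb F_p)$ collapses to the free $\mathbb F_p$-module on the set $\pi_0(M)$. By \Cref{thm:simple}, each path component $M_m \subseteq M$ is a simple space whose higher homotopy groups $\pi_n(M,m)$ are uniquely $p$-divisible, so this is the only input we really need.

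I would then invoke the classical fact that any simple space $X$ with uniquely $p$-divisible homotopy groups satisfies $\widetilde H_*(X;\mathbb F_p) = 0$. A self-contained justification goes by induction up the Postnikov tower of $X$, reducing (via mod-$p$ Serre spectral sequences for the successive fibrations $K(A_n,n) \to X_n \to X_{n-1}$, where the absence of nontrivial local coefficients is guaranteed by simplicity) to the statement that $\widetilde H_*(K(A,n);\mathbb F_p) = 0$ whenever $A$ is uniquely $p$-divisible, i.e.\ a $\mathbb Z[1/p]$-module. For $n=1$, one observes that the integral group homology $H_*(BA;\mathbb Z)$ is computed by the bar resolution whose terms are tensor powers of $A$ and hence $\mathbb Z[1/p]$-modules, so the claim follows from universal coefficients. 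For $n \geq 2$, one inducts via the mod-$p$ Serre spectral sequence of the path-loop fibration $K(A,n-1) \to \ast \to K(A,n)$: by the inductive hypothesis this spectral sequence is concentrated in the $q=0$ row $E^2_{p,0} = H_p(K(A,n);\mathbb F_p)$, and convergence to $\mathbb F_p$ in total degree $0$ forces those groups to vanish for $p > 0$.

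Combining this with the decomposition $M \simeq \coprod_{[m] \in \pi_0(M)} M_m$ yields $H_*(M;\mathbb F_p) \cong \mathbb F_p[\pi_0(M)]$ concentrated in degree zero, and the identification is exactly the one induced by $M \to \pi_0(M)$. There is no real obstacle here --- the structural work has been done in \Cref{thm:simple} --- although if one wishes to bypass the Postnikov argument entirely one may instead appeal directly to the Bousfield--Kan characterization of $H\mathbb F_p$-acyclic nilpotent spaces.
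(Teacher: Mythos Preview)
Your argument is correct and follows essentially the same route as the paper's: both invoke \Cref{thm:simple} to know that every component of $M$ is simple with uniquely $p$-divisible higher homotopy groups, and then appeal to Serre's mod-$\mathcal C$ theory to pass from homotopy to homology. The paper simply cites Serre's mod-$\mathcal C$ Hurewicz theorem (for the Serre class of uniquely $p$-divisible groups) and observes that a $p$-divisible $\mathbb F_p$-vector space is zero; you instead unpack that theorem via the Postnikov tower and the Serre spectral sequence, which is exactly how one proves it.

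One small slip worth fixing: in your $n=1$ step, the bar complex computing $H_*(BA;\mathbb Z)$ has terms $\mathbb Z[A^n]$, not $A^{\otimes n}$, so those terms are free abelian groups rather than $\mathbb Z[1/p]$-modules, and your one-line justification does not work as written. The conclusion $\widetilde H_*(K(A,1);\mathbb F_p)=0$ for uniquely $p$-divisible $A$ is of course true; a clean fix is to reduce by filtered colimits and K\"unneth to the cases $A=\mathbb Z[1/p]$ (where $K(A,1)$ is the telescope of $S^1\xrightarrow{p}S^1\to\cdots$) and $A=\mathbb Z/\ell^k$ with $\ell\neq p$, both of which are immediate.
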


\begin{proof}
    Indeed, from \Cref{thm:simple} and (a slight extension of) Serre's version of Hurewicz theorem (see e.g. the main theorem of \cite{modC}, p.55)  it follows that for a $p$-perfect monoid $M$, the groups $H_*(M;\mathbb F_p)$ are $p$-divisible for ${* \geqslant 1}$. Given that these groups are $\mathbb F_p$-vector spaces, it follows that they all vanish except $H_0(M;\mathbb F_p) \simeq H_0(\pi_0 (M);\mathbb F_p)$.
\end{proof}

\begin{rmk}
    It is not hard to prove that $\pi_0(M[\frac{1}{p}]) \cong \pi_0(M)[\frac{1}{p}]$ where the latter is meant in the classical sense (in particular, it equals $\pi_0 (\tel_p(M))$), so we can completely compute $H_*(M[\frac{1}{p}];\mathbb F_p)$. We will improve on this calculation in the rest of the paper.  
\end{rmk}

Next, we consider the following ``approximation'' to $p$-perfection.


\begin{cons}\label{cons:telp}
    Let $M$ be a commutative monoid. The \textit{$p$-telescope} of M is  \[\tel_p(M) := \colim(M\xrightarrow{p}M\xrightarrow{p}\dots),\] the colimit being taken in $\CMon$. 
\end{cons}

\begin{ques}\label{ques: telescope}
For a commutative monoid $M$, is the canonical map $\tel_p(M) \to M[\frac{1}{p}]$ an equivalence?
\end{ques}

For discrete commutative monoids, as well as grouplike commutative monoids (and more generally spectra) the answer to \Cref{ques: telescope} is indeed positive. However, using \Cref{thm:simple} we can see that in general it can be negative, as in the following example.

\begin{prop}\label{prop:counterex}
    Let $M=\F$. In this case, the canonical map $\tel_p(M)\to M[\frac{1}{p}]$ is \emph{not} an equivalence, i.e., the monoid $\tel_p(M)$ is \emph{not} $p$-perfect. 
\end{prop}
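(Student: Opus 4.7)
The plan is to use \Cref{thm:simple}(1) as the obstruction: if $\tel_p(\F)$ were $p$-perfect, then at every basepoint its fundamental group would have to be abelian. I will exhibit a basepoint where this fails.

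First, I would compute the underlying space of $\tel_p(\F)$. Since the forgetful functor $\CMon \to \Ss$ preserves sifted (and in particular filtered) colimits, and a sequential colimit is filtered, the telescope computed in $\CMon$ agrees with the telescope of underlying spaces. Combined with the fact that $\pi_0$ and $\pi_1$ commute with filtered colimits, this gives $\pi_0(\tel_p(\F)) \cong \tel_p(\N) \cong \N[\tfrac{1}{p}]$ and, for any basepoint $m$ of $\tel_p(\F)$, a presentation of $\pi_1(\tel_p(\F), m)$ as a sequential colimit of fundamental groups at a compatible sequence of basepoints in $\F$.

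Next, I would identify the transition maps. The multiplication-by-$p$ map $\F \to \F$ sends a finite set $S$ to $S \sqcup \cdots \sqcup S$ ($p$ copies), so on $\pi_1$ at the component $p^k$ it induces the block-diagonal group homomorphism $\Sigma_{p^k} \to \Sigma_{p^{k+1}}$, $\sigma \mapsto \sigma^{\oplus p}$. Choosing the basepoint $1 \in \N[\tfrac{1}{p}]$, represented by $p^k$ at stage $k$, we obtain
\[
\pi_1(\tel_p(\F), 1) \;\cong\; \colim\bigl(\Sigma_p \hookrightarrow \Sigma_{p^2} \hookrightarrow \Sigma_{p^3} \hookrightarrow \cdots\bigr),
\]
where each transition map is the injective block-diagonal embedding described above. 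Because each of these homomorphisms is injective, the colimit is simply the union, and hence contains each $\Sigma_{p^k}$ as a subgroup. Taking $k$ large enough so that $p^k \geq 3$, one has $\Sigma_{p^k}$ non-abelian, so the colimit is non-abelian.

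By \Cref{thm:simple}(1), a $p$-perfect commutative monoid has abelian fundamental groups at every basepoint, so $\tel_p(\F)$ cannot be $p$-perfect, and in particular the canonical map $\tel_p(\F) \to \F[\tfrac{1}{p}]$ cannot be an equivalence. The only mildly delicate step is bookkeeping: identifying the correct basepoint after passing to the telescope and checking that the transition maps really are the block-diagonal inclusions $\sigma \mapsto \sigma^{\oplus p}$; once this is in place, the non-abelianness of large symmetric groups closes the argument immediately.
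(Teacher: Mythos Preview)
Your proposal is correct and follows essentially the same route as the paper: compute $\pi_1$ of the telescope at the basepoint coming from $1\in\F$ as the filtered colimit $\colim_k \Sigma_{p^k}$ along the injective block-diagonal embeddings, observe that a nonabelian symmetric group sits inside, and invoke \Cref{thm:simple}. Your phrasing ``take $k$ large enough that $p^k\geq 3$'' is in fact slightly more careful than the paper's, which appeals to $\Sigma_p$ directly and so is a hair imprecise when $p=2$.
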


\begin{proof}
Fix $1\in \F$ and let $\underline 1 \in \tel_p(\F)$ be its image under the canonical map. Since the fundamental group functor commutes with filtered colimits, we get: $$\pi_1(\tel_p(\F),\underline 1)\cong \colim_k \pi_1(\F, p^k) \cong \colim_k \Sigma_{p^k}$$ and each of the maps $\Sigma_{p^k}\to \Sigma_{p^{k+1}}$ sends $\sigma$ to $(\sigma, ... ,\sigma)$ and in particular is injective. 

Thus $\Sigma_{p}$ embeds in $\pi_1(\tel_p(\F),\underline 1)$, which is therefore not abelian. It follows from \Cref{thm:simple} that $\tel_p(\F)$ is not $p$-perfect, and thus not equivalent to $\F[\frac{1}{p}]$. 
\end{proof}
\begin{rmk}\label{rmk:counterexfurther}
    With a similar argument, one may prove that the image of the group $\Sigma_p$ in $\pi_1(\tel_p(\F),\underline 1)$ is not $p$-divisible. The moral here is that $\sigma \in \Sigma_p$ is indeed sent to $(\sigma, ..., \sigma) = (\sigma, 1,...,1)(1,\sigma,1,...,1)\dots(1,...,1, \sigma) \in \Sigma_{p^2}$ \emph{but} each of the 
    $(1,...,1,\sigma,1,...)$ differs from the others by a conjugation. We explain this conjugation below. 

The failure of $\tel_p(\F)$ to be $p$-perfect is closely related to the failure of telescopes to be group complete, cf. \cite{thomas}. The idea here is that the Frobenius map on the space $\colim (M\to M\to ...)$ is induced by a map of diagrams, namely a map which looks like: 
\[\begin{tikzcd}
	M & M & M & \dots \\
	M & M & M & \dots
	\arrow["p", from=1-1, to=1-2]
	\arrow["p"', from=1-1, to=2-1]
	\arrow["p", from=1-2, to=1-3]
	\arrow[""{name=0, anchor=center, inner sep=0}, "p", from=1-2, to=2-2]
	\arrow[from=1-3, to=1-4]
	\arrow["p", from=1-3, to=2-3]
	\arrow[from=1-4, to=2-4]
	\arrow["p"', from=2-1, to=2-2]
	\arrow["p"', from=2-2, to=2-3]
	\arrow[from=2-3, to=2-4]
	\arrow["h"', draw=none, from=1-1, to=0]
\end{tikzcd}\]
The homotopy $h$ that is here is an \emph{interesting} homotopy, it is the one that witnesses that $p:M\to M$ is a map of commutative monoids, and hence commutes with the ``scalar'' $p$. It is a \emph{different} one than the homotopy $f\circ f= f\circ f$ that exists for any endomorphism $f$. If it were the latter, then the induced map on colimits would in fact be an equivalence, but because it is the former, there is no reason for it to be the case. 
This difference in homotopies is related to the fact that $(\sigma, ..., \sigma)$ is $p$-divisible \emph{up to conjugacy} in $\Sigma_{p^2}$ --- note that conjugacies in $\pi_1$ arise precisely from \emph{unpointed} homotopies, and they influence the induced map on colimits. 
\end{rmk}
In fact, we can make that precise with the following result --- this is analogous to \cite[Proposition 6]{thomas}, and the method is similar so for simplicity we only state the parts that will be relevant to the rest of the paper:
\begin{prop}\label{prop: sigma^3}
    Let $M$ be a commutative monoid. 
    \begin{enumerate}
        \item \label{item: tel p-perfect} Suppose $\tel_p(M)$ is $p$-perfect. Then it is the $p$-perfection of $M$. 
        \item Suppose $\pi_1(M,x)$ is hypoabelian for all $x$. Then $\tel_p(M)$ is $p$-perfect. More generally, $\tel_p(M)$ is $p$-perfect if for every $m\in M$, the image of $p^{\textnormal{3-cycle}}\in \Sigma_{p^3}$ in $\pi_1(M,p^3 m)$ is trivial, or the same statement with $3$ replaced by some $n\geq 3$.  
        \item More generally, suppose that for some $n\geq 3$ the image of $p^{n\textnormal{-cycle}}$ in $\pi_1(\tel_p(M), m)$ is $0$ for every $m\in M$. Then $\tel_p(M)$ is $p$-perfect. 
    \end{enumerate}
\end{prop}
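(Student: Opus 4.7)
The plan is to handle (1) by a direct universal-property argument, and (2)(3) by a $\pi_1$-analysis leveraging \Cref{thm:simple}.

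For (1), observe that the Frobenius $p\colon N\to N$ is a natural transformation of $\id_{\CMon}$, so precomposition with $p\colon M\to M$ and postcomposition with $p\colon N\to N$ give the same self-map of $\Map_{\CMon}(M,N)$. When $N$ is $p$-perfect, this self-map is an equivalence, so the tower $\Map_{\CMon}(M,N)\xleftarrow{p^*}\Map_{\CMon}(M,N)\xleftarrow{p^*}\cdots$ is essentially constant, giving $\Map_{\CMon}(\tel_p(M),N)\simeq \Map_{\CMon}(M,N)$. Hence $\tel_p(M)$ enjoys the same universal property relative to $p$-perfect monoids as $M[\tfrac{1}{p}]$, and when $\tel_p(M)$ is itself $p$-perfect, the two agree.

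For (2) and (3), I want to show the Frobenius $p\colon \tel_p(M)\to \tel_p(M)$ is an equivalence. On $\pi_0$ this is automatic since $\pi_0(\tel_p(M))=\pi_0(M)[\tfrac{1}{p}]$. For $n\geq 1$ and $m\in \tel_p(M)$, I identify $\pi_n(\tel_p(M),m)=\colim_k \pi_n(M,p^km')$ and compare the action of Frobenius with the level shift: the telescope provides a canonical path from $pm$ to $m$ (reflecting that shifting by one level coincides with multiplication by $p$), and under the resulting change-of-basepoint identification, the Frobenius on $\pi_n$ agrees with the identity up to conjugation by a specific element $\alpha_n\in \pi_1(\tel_p(M),m)$. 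Tracing the $\mathbb{E}_\infty$-coherences --- in particular the homotopy making $p\colon M\to M$ an $\mathbb{E}_\infty$-map, exactly as discussed in \Cref{rmk:counterexfurther} --- the element $\alpha_n$ turns out to be the image of a $p^n$-cycle under the canonical map $\Sigma_{p^n}\to \pi_1(M,p^n m)$ coming from the $\mathbb{E}_\infty$-structure. Under hypothesis (3), $\alpha_n$ vanishes in $\pi_1(\tel_p(M),m)$, so the conjugation is trivial and Frobenius is an equivalence on $\pi_n$; the action of $\pi_1$ on higher $\pi_n$ is handled by the same mechanism combined with \Cref{prop: rho_n}.

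Finally, (2) reduces to (3) via hypoabelian-ness: the iterated $p^n$-cycle embeds into $\Sigma_{p^n}$ through the tower of wreath products $\Sigma_p\wr\cdots\wr \Sigma_p\subset \Sigma_{p^n}$, and its image lies in progressively deeper terms of the derived series of $\pi_1(M,p^n m)$ as $n$ grows; in a hypoabelian group such elements are eventually trivial. The main technical obstacle I anticipate is the explicit identification of $\alpha_n$ with the image of a $p^n$-cycle, including verifying that $n=3$ is precisely the correct threshold; a secondary concern is ensuring the same mechanism simultaneously controls the action on higher homotopy groups so as to match the simplicity condition of \Cref{thm:simple}.
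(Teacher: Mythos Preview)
Your argument for (1) is correct and essentially identical to the paper's.

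For (3), your strategy is in the right spirit but the setup is confused. The telescope does \emph{not} provide a path from $pm$ to $m$ in $\tel_p(M)$: these points generally lie in different components, since the structure map identifies $m$ at level $k$ with $pm$ at level $k+1$, not with $m$ at level $k+1$. What is actually going on is that two maps of diagrams $(M\xrightarrow{p}M\xrightarrow{p}\cdots)\to(M\xrightarrow{p}M\xrightarrow{p}\cdots)$ have the same vertical components (namely $p$) but different $2$-cells: the trivial homotopy $p\circ p=p\circ p$ induces the shift, hence the identity on the colimit, while the Eckmann--Hilton interchange induces the Frobenius. The paper makes this precise by replacing the horizontal maps with $p^2$ and identifying the $2$-cell at $m$ explicitly with the image of the $p^{3\text{-}\mathrm{cycle}}$ in $\pi_1(M,p^3 m)$; once that element vanishes, the ladder lifts to a diagram of \emph{pointed} spaces and one can apply $\pi_n$ for all $n$ at once. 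In particular no separate treatment of the $\pi_1$-action on higher $\pi_n$ is needed, and \Cref{prop: rho_n} plays no role here.

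More seriously, your reduction of the hypoabelian case in (2) is incorrect. Hypoabelian means that $G$ has no nontrivial perfect subgroup, equivalently that the \emph{transfinite} derived series terminates; it does not imply that the finite derived series does. A nonabelian free group is hypoabelian yet has infinite derived length, so ``the image lies in progressively deeper terms of the derived series as $n$ grows'' does not force eventual triviality, even granting that unproved claim. The paper's reduction is both correct and much simpler: the $p^{3\text{-}\mathrm{cycle}}$ is a product of disjoint $3$-cycles and hence lies in the alternating group $A_{p^3}$, which is perfect since $p^3\geq 8$. Images of perfect groups are perfect, so its image in any hypoabelian $\pi_1(M,p^3 m)$ is already trivial; no passage to large $n$ is needed.
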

\begin{proof}
The proofs are the same as in \cite[Proposition 6]{thomas} (see also \Cref{cor:agreemarc} for a formal relation between the two statements --- note that the latter does not depend on this statement, so there is no circularity).

    Let $N$ be a commutative monoid. Since $p: X\to X$ is a natural transformation of the identity on commutative monoids, the following two maps $$\map_{\CMon}(p\cdot -, N): \Map_{\CMon}(M,N)\to \Map_{\CMon}(M,N)$$ and $$\map_{\CMon}(M, p\cdot -):\Map_{\CMon}(M,N)\to \Map_{\CMon}(M,N)$$ are homotopic. Therefore, if $N$ is $p$-perfect, the first one is an equivalence because the second one is. It follows that in this case, restriction along $M\to \tel_p(M)$ induces an equivalence $$\map_{\CMon}(\tel_p(M),N)\xrightarrow{\simeq}\map_{\CMon}(M,N).$$
    The first point then follows immediately by the universal property of $M[\frac{1}{p}]$. 

    For the second point, recall that a group is hypoabelian if it contains no nontrivial perfect subgroups. Since the $p^{3-\textnormal{cycle}}$ is a product of disjoint $3$-cycles, it belongs to $A_{p^3}\subset \Sigma_{p^3}$, which is perfect for all $p$ The images of perfect subgroups are perfect, hence the hypoabelian case is indeed a special case of the ``More generally'' statement. 

    Suppose then that the $p^{3-\textnormal{cycle}}$ is trivial in each $\pi_1(M,p^3m)$, and consider the diagram of commutative monoids $M\xrightarrow{p^2}M\xrightarrow{p^2 }M \to \dots $. 

    By \emph{naturality} of multiplication by $p$, this induces a commutative diagram of spaces --- we added  basepoints but it is not a priori a diagram of pointed spaces:
    \[\begin{tikzcd}
	{(M,m)} & {(M,p^2m)} & {(M,p^4 m)} & \dots & {(M,p^{2k}m)} \\
	{(M,pm)} & {(M,p^3m)} & {(M,p^5m)} & \dots & {(M,p^{2k+1}m)}
	\arrow["{p^2}", from=1-1, to=1-2]
	\arrow["p"', from=1-1, to=2-1]
	\arrow["{p^2}", from=1-2, to=1-3]
	\arrow["p"', from=1-2, to=2-2]
	\arrow["p"', from=1-3, to=2-3]
	\arrow["p"', from=1-5, to=2-5]
	\arrow["{p^2}"', from=2-1, to=2-2]
	\arrow["{p^2}"', from=2-2, to=2-3]
\end{tikzcd}\]

Here the $2$-cells are obtained using the fact that $p^2$ is a map of commutative monoids and $p$ is natural with respect to those. 

Unwinding this, we find that the homotopy $p\circ p^2\simeq p^2\circ p$ sends the point $m$ to the image of $p^{3-\textnormal{cycle}}\in \Sigma_{p^3}$ in $\pi_1(M,p^3m)$. 

This element is trivial in each $\pi_1(M,p^{2k+3}m)$ by assumption, and so the $2$-cells are homotopies of pointed maps, and so this map of diagrams lifts to a map of diagrams of pointed spaces. Then we may take $\pi_n$, and we find that the vertical map induces an isomorphism on $\pi_n$ on colimits. Since $\pi_n$ commutes with colimits, this implies that $p: \tel_p(M)\to \tel_p(M)$ induces an isomorphism on $\pi_n$ at every point and for all $n$, and is thus an equivalence, which is what we wanted to prove. 

More generally, if the image of $p^{3\textnormal{-cycle}}$ only vanishes in $\pi_1(\tel_p(M),m)$, we know it must vanish at some finite stage of the telescope, and we can run the same argument simply by cofinality. For details about this last point, see \cite{thomas}.
\end{proof}

\begin{rmk}
In particular, if $\pi_1(M)$ is hypoabelian at all points, then $\tel_p(M)$ is simple by \Cref{thm:simple} and hence hypoabelian as well. This is not \emph{a priori} clear, in the sense that in general, a filtered colimit of hypoabelian groups need not be hypoabelian. To give a concrete example, note that subgroups of free groups are free, so that if $F$ is a free group on countably many generators, there is an abstract isomorphism $f: F\cong [F,F]$. In particular, we may follow it up with the inclusion $[F,F]\subset F$ to build a sequence $F\to F\to F \to \dots$ of inclusions whose filtered colimit is a perfect group, while each $F$ is hypoabelian. 
\end{rmk}

As in \cite[Example 5.2]{algcob}, the above also works in the context of motivic spaces, i.e., $\mathbb A^1$-invariant sheaves on the site of smooth $S$-schemes $\mathrm{Sm}_S$ for some scheme $S$. Considering, for example, $M=\mathrm{Vect}\simeq\coprod_n \mathrm{BGL}_n$, we find that for any ring $R$, the relevant matrix corresponding to $p^{\textnormal{3-cycle}}$ is given by  permutation matrix. 

Since this matrix belongs to $\mathrm{SL}_{p^3}(\mathbb Z)$ (determinants of permutation matrices agree with the signature of the permutation), it is $\mathbb A^1$-homotopic to the identity. Thus we find:
\begin{cor}
   The canonical map of presheaves of spaces on $\mathrm{Sm}_S$ 
   $$\tel_p (\mathrm{Vect})\to \mathrm{Vect}[\frac{1}{p}]$$
   is an $\mathbb A^1$-homotopy equivalence on affines. In particular, it is a motivic equivalence.
\end{cor}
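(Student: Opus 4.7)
The plan is to apply \Cref{prop: sigma^3}(2) to $M = \mathrm{Vect}$ in the $\infty$-category of $\mathbb{A}^1$-invariant presheaves of commutative monoids on smooth affine $S$-schemes. Since $\tel_p$ is a filtered colimit and thus computed pointwise, and $(-)[\tfrac{1}{p}]$ is a left adjoint, both commute with $\mathbb{A}^1$-localization; hence the universal property used in \Cref{prop: sigma^3}(1) and the criterion in \Cref{prop: sigma^3}(2) transfer to the localized setting. It therefore suffices to verify that for every smooth affine $\mathrm{Spec}(R)$ and every point $m \in \mathrm{Vect}(R)$, the image of the $p^{3\text{-cycle}}$ in the $\mathbb{A}^1$-invariant $\pi_1(\mathrm{Vect}, p^3 m)(R)$ vanishes.

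Reducing to a trivial bundle summand, this $\pi_1$ is identified with the $\mathbb{A}^1$-invariant $\pi_0$ of the sheaf $\mathrm{GL}_{p^3}$, and the image in question is the permutation matrix $P_\sigma \in \mathrm{GL}_{p^3}(\mathbb{Z}) \to \mathrm{GL}_{p^3}(R)$ attached to $\sigma = p^{3\text{-cycle}}$. As noted in the discussion preceding the corollary, $\sigma$ is a product of $3$-cycles, hence lies in $A_{p^3}$, so $P_\sigma \in \mathrm{SL}_{p^3}(\mathbb{Z})$. Because $p^3 \geq 3$, the group $\mathrm{SL}_{p^3}(\mathbb{Z})$ is generated by elementary matrices $e_{ij}(a)$, and each such matrix is $\mathbb{A}^1$-homotopic to the identity via the path $t \mapsto e_{ij}(ta) \in \mathrm{SL}_{p^3}(R[t])$. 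Composing these homotopies shows that $P_\sigma$ becomes trivial in the $\mathbb{A}^1$-invariant $\pi_0$ of $\mathrm{GL}_{p^3}$ over $R$, completing the verification.

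Applying \Cref{prop: sigma^3}(2) in the $\mathbb{A}^1$-invariant setting then shows that $\tel_p(\mathrm{Vect})$ is $p$-perfect there, and by \Cref{prop: sigma^3}(1) it agrees with $\mathrm{Vect}[\tfrac{1}{p}]$, yielding the $\mathbb{A}^1$-homotopy equivalence on affines. The assertion that this map is a motivic equivalence on $\mathrm{Sm}_S$ then follows from the fact that every smooth $S$-scheme is Nisnevich-locally affine, so an $\mathbb{A}^1$-equivalence on affines becomes a Nisnevich-local $\mathbb{A}^1$-equivalence, that is, a motivic equivalence, on all of $\mathrm{Sm}_S$.

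The main obstacle is bookkeeping: one must check that the arguments of Section 1 — naturality of Frobenius, the universal property of $(-)[\tfrac{1}{p}]$, and the identification of the relevant $\pi_1$-sheaf with $\mathrm{GL}_{p^3}$ — pass cleanly through $\mathbb{A}^1$-localization of presheaves of commutative monoids on the affine site. Since $\tel_p$ and $(-)[\tfrac{1}{p}]$ both commute with $\mathbb{A}^1$-localization and all arguments in the proof of \Cref{prop: sigma^3} are functorial, this is a routine translation rather than a new argument.
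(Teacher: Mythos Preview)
Your proposal is correct and follows essentially the same route as the paper: the corollary is deduced from \Cref{prop: sigma^3} once one checks that the image of the $p^{3\text{-cycle}}$ is the associated permutation matrix, which lies in $\mathrm{SL}_{p^3}(\mathbb Z)$ (since the permutation is even) and is therefore $\mathbb A^1$-homotopic to the identity. One minor imprecision: for a general basepoint $m\in\mathrm{Vect}(R)$ the group $\pi_1(\mathrm{Vect},p^3m)$ is $\mathrm{Aut}(m^{p^3})$ rather than $\mathrm{GL}_{p^3}$, but the relevant element factors through $\mathrm{GL}_{p^3}(\mathbb Z)\to \mathrm{Aut}(m^{p^3})$ via $A\mapsto A\otimes \id_m$, so your $\mathbb A^1$-homotopy still does the job.
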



\section{Comparison of $p$-perfection and group completion}\label{sec:group}

In this section, we prove our main result for many commutative monoids, including $\F$ and $\Proj(R)^\simeq$ with connected $\Spec R$: after $p$-perfection, connected components of such commutative monoid embed as ``positive'' components of its group completion, except for the component of 0. 

First, we introduce some notation.

\begin{nota}
    Let $x \in M$ be a point in a commutative monoid. 
    \begin{itemize}
        
    \item The \textit{telescope of $M$ at $x$} is 
    \[\tel^x(M) := \colim(M\xrightarrow{+x}M\xrightarrow{+x}\dots),\] the colimit being taken in $\Ss$.

    \item The formal inversion of $x$ on $M$ is $M[-x]$,  which is typically not the same as the telescope $\tel^x(M)$. 
    \end{itemize}
\end{nota}

\begin{defn}\label{def:locmon}
    Let $M$ be a commutative monoid. We say that $M$ is \textit{locally monogenic} if for any non-zero $x, y 
    \in M$ there exists $n \in \N$ and $z \in M$ such that $nx = y+z$.
\end{defn}

We introduce locally monogenic monoids because of the following nice property, which follows immediately from the definition. 

\begin{prop}\label{prop: invert anything}
    Let $M$ be a locally monogenic monoid. Then for any non-zero $x \in M$ the map $M[-x] \to M^\gp$ is an equivalence.
\end{prop}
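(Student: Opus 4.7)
The plan is to produce mutually inverse maps $M[-x] \rightleftarrows M^\gp$ via universal properties. The map $M[-x] \to M^\gp$ is automatic: every element of $M^\gp$ is invertible, so in particular the image of $x$ is, and the universal property of $M[-x]$ supplies a canonical map. Conversely, provided $M[-x]$ is grouplike, the universal property of group completion supplies a map $M^\gp \to M[-x]$; a formal argument then shows that both round-trip composites restrict to the identity on $M$ and are therefore equal to the identity. Thus the content of the proposition reduces entirely to proving that $M[-x]$ is grouplike.

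Grouplikeness of a commutative monoid is detected on $\pi_0$, so it suffices to show $\pi_0(M[-x])$ is a group. I would first identify $\pi_0(M[-x])$ with $\pi_0(M)[-x]$, the formal inversion computed in the ordinary category of commutative monoids in sets. This holds because $\pi_0$ is $0$-truncation, hence a left adjoint, and therefore commutes with the localization defining $M[-x]$; alternatively one can run a direct Yoneda argument, since mapping a discrete target out of $M[-x]$ picks out precisely those $\pi_0$-level maps which invert $x$. Next I would prove the elementary algebraic statement that, for a locally monogenic discrete commutative monoid $A$ and nonzero $x \in A$, the monoid $A[-x]$ is a group. The key observation is that in any commutative monoid, whenever $a + b$ is invertible, so is $a$: if $(a+b) + c = 0$ then $a + (b+c) = 0$. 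Given $y \in A$ nonzero, local monogenicity yields $n$ and $z$ with $nx = y + z$; since $nx$ becomes invertible in $A[-x]$, so does $y + z$, and hence so does $y$. Every element of $A$ is therefore invertible in $A[-x]$, and so is $-x$ by construction; since $A[-x]$ is generated as a monoid by the images of $A$ and $-x$, every element of $A[-x]$ is invertible, i.e., $A[-x]$ is a group.

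The main technical ingredient is the identification $\pi_0(M[-x]) \cong \pi_0(M)[-x]$; once this is in hand, the rest is the one-line observation about invertibility in commutative monoids plus the universal-property formalities of the first paragraph. Combining these steps yields that $\pi_0(M[-x])$ is a group, hence $M[-x]$ is grouplike, which is exactly what was needed to make the inverse map $M^\gp \to M[-x]$ exist and complete the argument.
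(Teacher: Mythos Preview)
Your proposal is correct and follows essentially the same approach as the paper: both arguments hinge on the observation that $nx = y + z$ forces $y$ to become invertible in $M[-x]$, so that $M[-x]$ is grouplike and hence shares the universal property of $M^\gp$. The paper compresses this into a single sentence, whereas you spell out the reduction to $\pi_0$ and the identification $\pi_0(M[-x]) \cong \pi_0(M)[-x]$ that the paper leaves implicit.
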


\begin{proof}
    Since for a fixed non-zero $x$ and for any $y$ we have a presentation of the form $nx = y + z$, we get that any $y$ becomes invertible in $M[-x]$, so the two share the same universal property.
\end{proof}

\begin{ex}\
The monoid $\F$ is locally monogenic.
\end{ex}

\begin{ex}\
For a ring $R$ with connected $\Spec R$, the monoid $\Proj(R)^\simeq$ is locally monogenic by~\Cref{prop: Proj connected} below.
\end{ex}

\begin{prop}\label{prop: Proj connected}
    Let $R$ be a ring and $P$ a finitely generated projective $R$-module. Assume that the rank of $P$ is a non-vanishing function on $\Spec R$ (for example, $P$ is non-zero and $\Spec R$ is connected). Then there exists $n \in \N$ such that $P^n$ splits off a trivial summand $R$.
\end{prop}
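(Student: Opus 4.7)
The plan is to construct, for some $n \geq 1$, a surjective $R$-module map $\phi \colon P^n \twoheadrightarrow R$. Since $R$ is a projective $R$-module, any such surjection splits, yielding an isomorphism $P^n \cong R \oplus \ker(\phi)$ as desired. Producing $\phi$ amounts to finding $R$-linear maps $f_1,\ldots,f_n \colon P \to R$ whose combined image $\sum_i f_i(P) \subseteq R$ is all of $R$, since one can then take $\phi(y_1,\ldots,y_n) = \sum_i f_i(y_i)$.

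First I would introduce the trace ideal $\tau(P) := \sum_{f \in \Hom_R(P,R)} f(P)$. Since $P$ is finitely generated projective, so is $\Hom_R(P,R)$; in particular, the latter is finitely generated as an $R$-module, say by $f_1, \ldots, f_n$, and then by $R$-linearity $\tau(P) = \sum_{i=1}^n f_i(P)$ coincides with the image of the corresponding map $\phi\colon P^n \to R$. Thus it suffices to show $\tau(P) = R$.

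I would prove $\tau(P) = R$ by a localization argument at maximal ideals. Suppose for contradiction that $\tau(P) \subseteq \mathfrak{m}$ for some maximal ideal $\mathfrak{m}$ of $R$. Since $P$ is finitely presented, formation of $\Hom_R(-,R)$ commutes with localization at $\mathfrak{m}$, so the localized maps $(f_i)_\mathfrak{m}$ still generate $\Hom_{R_\mathfrak{m}}(P_\mathfrak{m}, R_\mathfrak{m})$ as an $R_\mathfrak{m}$-module, giving $\tau(P_\mathfrak{m}) = \tau(P)_\mathfrak{m} \subseteq \mathfrak{m} R_\mathfrak{m}$. But by hypothesis $P_\mathfrak{m}$ has positive rank $r \geq 1$, so $P_\mathfrak{m} \cong R_\mathfrak{m}^r$ admits a coordinate projection $P_\mathfrak{m} \twoheadrightarrow R_\mathfrak{m}$ lying in $\Hom_{R_\mathfrak{m}}(P_\mathfrak{m}, R_\mathfrak{m})$, which forces $\tau(P_\mathfrak{m}) = R_\mathfrak{m}$, contradicting $\tau(P_\mathfrak{m}) \subseteq \mathfrak{m} R_\mathfrak{m}$.

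The main obstacle is essentially only identifying the right object to study: once the trace ideal enters the picture, the hypothesis that the rank is nowhere zero translates almost mechanically into $\tau(P) = R$ by a check at each maximal ideal, and the finite value of $n$ is automatic from the fact that $\Hom_R(P,R)$ is finitely generated over $R$.
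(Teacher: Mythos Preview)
Your proof is correct and takes a genuinely different route from the paper's. The paper argues geometrically: it chooses a finite cover $\Spec R = \bigcup_{i=1}^n D(f_i)$ on which $P$ becomes free, picks on each $D(f_i)$ a surjection $P[\frac{1}{f_i}]\twoheadrightarrow R[\frac{1}{f_i}]$, clears denominators to lift these to maps $r_i\colon P\to R$, and then observes that $\bigoplus r_i\colon P^n\to R$ is surjective because it is so on each $D(f_i)$. Your approach instead names the trace ideal $\tau(P)$, reduces the question to $\tau(P)=R$, and checks this at maximal ideals using that $\Hom_R(P,-)$ commutes with localization for finitely presented $P$. The two arguments are morally the same local-to-global principle, but packaged differently: your version is more algebraic and identifies the obstruction as a single ideal, with $n$ coming out as the number of generators of $P^*$; the paper's version is more explicitly constructive, with $n$ being the size of a trivializing cover. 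Either way the splitting is automatic once surjectivity onto $R$ is established.
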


\begin{proof}
    $P$ is a projective module hence locally free. Since $\Spec R$ is quasicompact, there is a finite cover by distinguished open sets  $\Spec R = \bigcup_{i=1}^n D(f_i)$ such that $P[\frac{1}{f_i}]$ is a free $R[\frac{1}{f_i}]$-module for every $i$.

    For each $i$, pick a surjection $\bar{r}_{i} \colon P[\frac{1}{f_i}] \to R[\frac{1}{f_i}]$.
    Since $P$ is finitely generated projective, up to a power of $f_i$ we can lift $\bar{r}_{i}$ to a map $r_i \colon P \to R$. Consider the map of $R$-modules $r = \bigoplus_{i=1}^n r_i \colon  P^{n} \to R$. The map $r$ induces a map of $R$-modules which is a surjection locally on $\Spec R$, hence it is surjective. Since its target is $R$, it automatically splits.
\end{proof}

\begin{thm}\label{thm:group completion}
    Let $M$ be a locally monogenic commutative monoid. Consider the pullback square of monoids, induced by the group completion map $M \to M^\gp$:
    
   \[\begin{tikzcd}
	N & M^\gp[\frac{1}{p}]  \\
\pi_0(M)[\frac{1}{p}] & \pi_0(M)^\gp[\frac{1}{p}] 
	\arrow[from=1-1, to=1-2]
	\arrow[from=1-1, to=2-1]
	\arrow[from=1-2, to=2-2]
	\arrow[from=2-1, to=2-2]
\end{tikzcd}\]

Then the natural map $M[\frac{1}{p}] \to N$ is an isomorphism on $\pi_0$ and an equivalence at all components except 0.

\end{thm}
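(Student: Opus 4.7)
My plan splits into a $\pi_0$-analysis and a component-level comparison via the group completion theorem.

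For $\pi_0$, we have $\pi_0(M[\tfrac{1}{p}]) \cong \pi_0(M)[\tfrac{1}{p}]$ (noted as standard in the paper). Since $\pi_0(M)[\tfrac{1}{p}]$ and $\pi_0(M)^{\gp}[\tfrac{1}{p}]$ are discrete, $\pi_0$ preserves the defining pullback, giving $\pi_0(N) = \pi_0(M)[\tfrac{1}{p}] \times_{\pi_0(M)^{\gp}[\tfrac{1}{p}]} \pi_0(M)^{\gp}[\tfrac{1}{p}] = \pi_0(M)[\tfrac{1}{p}]$, and the induced map is the identity.

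Fix a non-zero $[y] \in \pi_0(M)[\tfrac{1}{p}]$ with a representative $y \in M$ having $[y] \neq 0$ in $\pi_0(M)$. Under the pullback description, $N_{[y]} = M^{\gp}[\tfrac{1}{p}]_{[y]}$, so the task is to show the natural map $M[\tfrac{1}{p}]_{[y]} \to M^{\gp}[\tfrac{1}{p}]_{[y]}$ is an equivalence. Using that group completion and $p$-inversion commute as reflective localizations, this map is the $[y]$-component of the group completion of $M[\tfrac{1}{p}]$. By local monogenicity, $y$ is cofinal in $\pi_0(M)$, so the McDuff--Segal group completion theorem applied to the commutative monoid $M[\tfrac{1}{p}]$ yields that $\tel^y(M[\tfrac{1}{p}]) \to M^{\gp}[\tfrac{1}{p}]$ is a $\mathbb{Z}$-homology equivalence on each component. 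By \Cref{thm:simple}, $M[\tfrac{1}{p}]$ is simple with uniquely $p$-divisible $\pi_n$, and these properties pass to the filtered colimit, so both sides are simple with $\mathbb{Z}[\tfrac{1}{p}]$-module homotopy groups; Whitehead's theorem for nilpotent $\mathbb{Z}[\tfrac{1}{p}]$-local spaces then upgrades the homology equivalence to a genuine equivalence. Restricting to the $[y]$-component yields $\colim_k M[\tfrac{1}{p}]_{[(k+1)y]} \simeq M^{\gp}[\tfrac{1}{p}]_{[y]}$.

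It then remains to show that the first-stage inclusion $M[\tfrac{1}{p}]_{[y]} \hookrightarrow \colim_k M[\tfrac{1}{p}]_{[(k+1)y]}$ is itself an equivalence. Classical group completion stabilization, combined with the Whitehead step above, tells us that the transition maps $+y\colon M[\tfrac{1}{p}]_{[(k+1)y]} \to M[\tfrac{1}{p}]_{[(k+2)y]}$ become equivalences for $k$ sufficiently large, so the colimit stabilizes at some stage $M[\tfrac{1}{p}]_{[Ky]}$. The main obstacle is to transfer this stable equivalence back to stage $0$. My approach is to exploit the Frobenius: taking $K = p^j - 1$ for $j$ sufficiently large, the Frobenius $p^j \colon M[\tfrac{1}{p}] \to M[\tfrac{1}{p}]$ is an equivalence restricting to $M[\tfrac{1}{p}]_{[y]} \simeq M[\tfrac{1}{p}]_{[p^j y]}$, and one checks --- using the Frobenius commutation $p \circ (+y) \simeq (+py) \circ p$, the unique $p$-divisibility of $\pi_n$'s, and the trivial $\pi_1$-action from \Cref{thm:simple} --- that this equivalence is compatible with the comparison to the corresponding component of $M^{\gp}[\tfrac{1}{p}]$, thereby transferring the stabilized equivalence down to stage $0$.
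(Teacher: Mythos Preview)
Your $\pi_0$ analysis and the telescope-to-group-completion comparison are fine and match the paper's approach (the paper cites \cite[Corollary 7]{thomas} for the step you do via McDuff--Segal plus Whitehead for simple spaces). The problem is entirely in your final paragraph, where there are two genuine gaps.

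First, the assertion that ``the transition maps $+y$ become equivalences for $k$ sufficiently large'' does not follow from the group completion theorem. That theorem tells you about the homology of the \emph{colimit}, not that the tower stabilizes. Nothing you have established up to that point forces any individual transition map to be an equivalence.

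Second, your Frobenius transfer is hand-waved at exactly the crucial moment. Knowing that $p^j$ gives an equivalence $M[\tfrac{1}{p}]_{[y]}\simeq M[\tfrac{1}{p}]_{[p^j y]}$ is not enough: you need to relate this equivalence to the specific telescope transition maps $+y$, and the commutation $p\circ(+y)\simeq (+py)\circ p$ alone does not do this, since it compares $+y$ on one side with $+py$ on the other. The ``one checks'' hides precisely the computation that makes the argument work.

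The paper's proof of \Cref{prop: one generator} supplies this missing computation directly and in fact renders your stabilization step unnecessary. Working on $\pi_n$ at the basepoint $x$, the Frobenius $(M,x)\xrightarrow{\Delta}(M,x)^p\xrightarrow{\mu}(M,px)$ sends $g\mapsto p\cdot\trans_{(p-1)x}(g)$ (this is \cite[Proposition 2.9]{ben}). Since the Frobenius is an equivalence (as $M[\tfrac{1}{p}]$ is $p$-perfect) and multiplication by $p$ on $\pi_n$ is an isomorphism (by \Cref{thm:simple}), the translation map $+(p-1)x$ is an isomorphism on all $\pi_n$, hence an equivalence of components. Thus \emph{every} transition map in $\tel^{(p-1)x}(M[\tfrac{1}{p}])\simeq \tel^x(M[\tfrac{1}{p}])$ is already an equivalence, and the inclusion of the first stage is an equivalence without any appeal to eventual stabilization.
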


The statement of \Cref{thm:group completion} has this ``except 0'' condition for a concrete reason. To describe it, we introduce:
\begin{defn}
Let $M$ be a commutative monoid. We say that \textit{$0$ is isolated in $M$} if the following two conditions hold:
\begin{itemize}
    \item The inclusion $0\to M$ is an inclusion of components, i.e., $\Omega(M,0)\simeq \pt$; 
    \item For $x,y\in M, (x+y\simeq 0)\implies (x\simeq y\simeq 0)$
\end{itemize}
\end{defn}
\begin{ex}
Let $C$ be a category with coproducts such that $x\coprod y \simeq 0$ implies $x = 0$ or $y=0$, e.g. if $C$ is semiadditive, or $C= \Fin$. In this case, $0$ is isolated in $(C^\simeq,\coprod)$. 
\end{ex}
\begin{lm}
    Let $M$ be a commutative monoid. $0$ is isolated in $M$ if and only if $M=N_+$ for some non-unital commutative monoid $N$, where $(-)_+$ is the left adjoint to the forgetful functor from commutative monoids to non-unital commutative monoids. 
\end{lm}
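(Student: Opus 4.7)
The plan is to prove both implications directly, handling the easy direction by unpacking $N_+$ explicitly and the converse by constructing the candidate $N$ as the components of $M$ away from $0$.

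For the direction $M = N_+ \Rightarrow 0$ is isolated: first I would describe $N_+$ at the level of underlying spaces. Since $(-)_+$ is left adjoint to the forgetful functor $U$ to non-unital commutative monoids, $N_+$ is built by freely adjoining a unit; its underlying space is $N \sqcup \{*\}$ with $*$ serving as the unit $0$, and the multiplication extending that of $N$ by $n + * = n$. Condition (1) is then immediate since the component of $0$ is literally $\{*\}$. For condition (2), if $x + y \simeq *$ in $N_+$ and, say, $x \in N$, then either $y \in N$ (in which case $x + y \in N$ since $N$ is closed under its semigroup structure) or $y = *$ (in which case $x + y = x \in N$); both force $x + y \neq *$, a contradiction, so both $x$ and $y$ must equal $*$.

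For the converse, assume $0$ is isolated in $M$. The natural candidate is $N$, the full subspace of $M$ spanned by components other than that of $0$; condition (1) then identifies $M \simeq N \sqcup \{0\}$ as spaces. The key step -- and main technical obstacle -- is to equip $N$ with a non-unital commutative monoid structure. By iterating condition (2), for every $n \geq 1$ the $n$-ary multiplication $M^n \to M$ restricts to a map $N^n \to N$. To make this $\infty$-categorically precise, I would model (non-unital) commutative monoids as Segal-type functors on $\Fin_*$ (respectively on a non-unital variant such as the subcategory of active surjections), and verify that restricting the functor associated to $M$ to sub-spaces with entries in $N$ continues to satisfy the Segal condition -- this is exactly the content of stability under operations.

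With $N$ equipped with such a structure, the inclusion $N \hookrightarrow UM$ is a map of non-unital commutative monoids, and adjunction yields a map of commutative monoids $\varphi \colon N_+ \to M$. On underlying spaces $\varphi$ sends $N$ via the inclusion and $* \mapsto 0$; this is a bijection on $\pi_0$ and an equivalence on each component (the identity on the $N$-components, and a map from a point to a contractible space at $0$ by condition (1)). Hence $\varphi$ is an equivalence of underlying spaces and therefore of commutative monoids, completing the proof.
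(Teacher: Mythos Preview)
Your proposal is correct and follows essentially the same route as the paper: for the ``if'' direction you use the identification of the underlying space of $N_+$ with $N\sqcup\{*\}$ (the paper cites \cite[Proposition~5.4.4.8]{HA} for this), and for the ``only if'' direction you take $N$ to be the union of nonzero components, use condition~(2) to see it is closed under the operations, and then check that the adjoint map $N_+\to M$ is an equivalence on underlying spaces using condition~(1). The only cosmetic difference is that you spell out the Segal-style justification for why $N$ inherits a non-unital commutative monoid structure, whereas the paper simply asserts closure under products suffices.
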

\begin{proof}
``If'': By \cite[Proposition 5.4.4.8]{HA}, the underlying space of $N_+$ is $N\coprod \{0\}$, proving the first condition of $0$ being isolated; the map $N\to N_+$ being a map of nonunital commutative monoids shows that the second condition is also satisfied. 

``Only if'': Suppose $0$ is isolated in $M$. By the second condition in the definition of isolated, the full sub-space of $M$ spanned by non-zero components is closed under products and therefore defines a non-unital commutative monoid $N$ with a non-unital commutative monoid map $N\to M$, which induces a map $N_+\to M$. Again by \cite[Proposition 5.4.4.8]{HA}, and now using the first condition in the definition, we see that $N_+\to M$ is an equivalence.
\end{proof}

\begin{prop}\label{prop: isolated}
    If $M$ is a commutative monoid and 0 is an isolated point, then 0 is also isolated in $M[\frac{1}{p}]$.
\end{prop}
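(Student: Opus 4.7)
The plan is to leverage the preceding lemma, which identifies the commutative monoids in which $0$ is isolated as exactly those of the form $N_+$ for $N$ a non-unital commutative monoid. Writing $M\simeq N_+$, the goal becomes to show that $(N_+)[\frac{1}{p}]$ is again of the form $(N')_+$ for some non-unital $N'$; the preceding lemma will then finish the job.

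To make this precise, I would first set up a non-unital analogue of $p$-perfection. The Frobenius $p\colon N\to N$ makes sense for any non-unital commutative monoid $N$ (it is just the natural transformation $p\cdot\mathrm{id}$, which requires no unit), so one may define $\CMon^{\mathrm{nu}}[\frac{1}{p}]\subset\CMon^{\mathrm{nu}}$ as the full subcategory of monoids on which Frobenius is an equivalence; its inclusion should admit a left adjoint $N\mapsto N[\frac{1}{p}]^{\mathrm{nu}}$ by standard presentability arguments parallel to the unital case.

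Next, I would verify that the adjunction $(-)_+\dashv U$ between $\CMon^{\mathrm{nu}}$ and $\CMon$ passes to the $p$-perfect subcategories. That the forgetful functor $U$ preserves $p$-perfect objects is immediate, since the Frobenius on $U(M)$ has the same underlying map as on $M$. For $(-)_+$, I would invoke \cite[Proposition 5.4.4.8]{HA} to identify the underlying space of $N_+$ with $N\sqcup\{0\}$, and then observe that the Frobenius on $N_+$ restricts to that of $N$ on the $N$-summand and to the identity on the $\{0\}$-summand, hence is an equivalence whenever $N$ is. Uniqueness of adjoints will then give a natural equivalence $(N_+)[\frac{1}{p}]\simeq (N[\frac{1}{p}]^{\mathrm{nu}})_+$, and the preceding lemma applied to the right-hand side concludes the proof.

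The main point to watch is the rigorous construction of $\CMon^{\mathrm{nu}}[\frac{1}{p}]$ and its left adjoint, but this should follow from routine presentability considerations. A more hands-on alternative would be to verify the two conditions of isolation directly on $M[\frac{1}{p}]$: the $\pi_0$-level condition follows from the identification $\pi_0(M[\frac{1}{p}])\cong\pi_0(M)[\frac{1}{p}]$ noted in the remark above, but the contractibility of the component of $0$ seems to require additional structural input, so the adjoint-based argument appears cleaner.
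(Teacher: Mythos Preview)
Your proposal is correct and follows essentially the same route as the paper: write $M\simeq N_+$ via the preceding lemma, then argue that $(N_+)[\frac{1}{p}]\simeq (N[\frac{1}{p}]^{\mathrm{nu}})_+$ because $(-)_+$ and $(-)[\frac{1}{p}]$ are left adjoints of commuting forgetful functors. The paper compresses this into one sentence, whereas you spell out the verification that both $U$ and $(-)_+$ preserve $p$-perfect objects, but the underlying argument is the same.
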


\begin{proof}
    If $M = N_+$ where $N$ is a non-unital commutative monoid then $M[\frac{1}{p}] \simeq N[\frac{1}{p}]_+$, where now we consider $(-)[\frac{1}{p}]$ as functor on non-unital commutative monoids. Indeed, $(-)[\frac{1}{p}]$ and $(-)_+$ commute as they are left adjoints of commuting forgetful functors.
\end{proof}

\begin{rmk}
    \Cref{thm:group completion} is not true for an arbitrary commutative monoid, for example, it does not hold for $ M = \Proj(R)^\simeq$ when $R = \Z \times \Z$. Indeed, consider the point $(\Z, 0)$ in $M[\frac{1}{p}] \simeq \Proj(\Z)^\simeq [\frac{1}{p}] \times \Proj(\Z)^\simeq [\frac{1}{p}]$. By \Cref{prop: isolated}, $0$ is isolated in the second summand and so the component of $(\Z,0)$ in this product is equivalent to a component of $K(\Z)[\frac{1}{p}]\times \{0\}$. However, the point $0$ is not isolated in the second summand of $M^\gp[\frac{1}{p}] \simeq K(\Z)[\frac{1}{p}] \times K(\Z)[\frac{1}{p}]$ and the component of $(\Z,0)$ therein is equivalent to a product of a component of $ K(\Z)[\frac{1}{p}]$ with itself. 

    On the other hand, \Cref{thm:group completion} can also be true for a monoid that is not locally monogenic: for example, it holds for formal reasons for the monoid $M = K(\Z)_{\geqslant 0} \times K(\Z)_{\geqslant 0}$, where $K(\Z)_{\geqslant 0} = K(\Z) \times_\Z \N$.
\end{rmk}

We first prove the following partial case of \Cref{thm:group completion}.

\begin{prop}\label{prop: one generator}
Let $M$ be a $p$-perfect commutative monoid such that for some $x \in M$ the group completion map $M[-x] \to M^\gp$ is an equivalence. Denote $T = \tel^x(M)$. Then
the map \[(M, x) \to (T,x) \to (M^\gp, x)\]
is an equivalence between the component of $x$ in $M$ and the component of (the image of) $x$ in $M^\gp$. 
\end{prop}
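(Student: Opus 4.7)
The plan is to factor the given composite as $(M, x) \to (T, x) \to (M^{\gp}, x)$ and prove that each arrow is a homotopy equivalence on the $x$-component, leaning on the simplicity of $M$ from \Cref{thm:simple} and a McDuff--Segal type group completion theorem. Throughout, write $M_y$ for the connected component of $y$ in $M$.

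For the map $(T, x) \to (M^{\gp}, x)$, both spaces are simple: $M^{\gp}$ because it is grouplike, and $T = \tel^x(M)$ because it is a filtered colimit of simple spaces along pointed maps (the abelianness of $\pi_1$ and the triviality of its action on $\pi_n$ both pass to filtered colimits). The hypothesis $M[-x] \simeq M^{\gp}$ applied to $\pi_0$ gives $\pi_0(T) = \pi_0(M)[-x] \cong \pi_0(M^{\gp})$. For integral homology, $H_*(T;\mathbb{Z}) = \colim_k H_*(M;\mathbb{Z})$ along multiplication by $[x]$ agrees with the localization $H_*(M;\mathbb{Z})[[x]^{-1}]$, which coincides with $H_*(M^{\gp};\mathbb{Z})$ by the classical group completion theorem using our $\pi_0$ hypothesis. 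Whitehead's theorem for nilpotent spaces then upgrades this homology equivalence to a weak equivalence on each component.

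For the map $(M, x) \to (T, x)$, the plan is to show that the transition maps $+x \colon M_{kx} \to M_{(k+1)x}$ in the telescope are already homotopy equivalences for $k \geq 1$, so that the colimit $(T, x) = \colim_k M_{kx}$ stabilizes at the first stage. The main structural input from $p$-perfection is that Frobenius $p \colon M \to M$ is both an equivalence and a map of commutative monoids, yielding the commutative square
\[
\begin{tikzcd}
M_{kx} \arrow[r, "+x"] \arrow[d, "p"'] & M_{(k+1)x} \arrow[d, "p"] \\
M_{pkx} \arrow[r, "(+x)^p"'] & M_{p(k+1)x}
\end{tikzcd}
\]
whose vertical arrows are equivalences. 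This reduces the equivalence of $+x$ to that of $(+x)^p$. Combined with the unique $p$-divisibility of $\pi_*(M, y)$ from \Cref{thm:simple} (each $\pi_n$ is a $\mathbb{Z}[1/p]$-module), one bootstraps to conclude that each $+x$ is already an equivalence.

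This second step is the main obstacle: the Frobenius square links $+x$ to $(+x)^p$ but does not directly prove either is an equivalence. To close the argument, one combines this structural compatibility with the $\mathbb{Z}[1/p]$-linearity of the transitions and the homology identification from the first step, which together pin down the $\pi_*$-behavior and force the transition maps to be equivalences.
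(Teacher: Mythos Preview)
Your first step---showing $(T,x)\to(M^\gp,x)$ is an equivalence via simplicity of both spaces and the group-completion theorem---is correct and is exactly what the paper does (it cites \cite[Corollary~7]{thomas} for this).

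Your second step has a genuine gap, which you yourself flag. The square you draw is correct, but it only says that $+x\colon M_{kx}\to M_{(k+1)x}$ is an equivalence if and only if $+px\colon M_{pkx}\to M_{p(k+1)x}$ is one; this gives no starting point. The closing paragraph does not fill the hole: ``one bootstraps'' and ``force the transition maps to be equivalences'' are not arguments, and I do not see how the homology identification from the first step can be leveraged here without already knowing the conclusion.

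The ingredient you are missing is a computation of what the Frobenius \emph{does} on homotopy groups, not merely a naturality square. For any basepoint $y$, the map $p\colon(M,y)\to(M,py)$ acts on $\pi_n$ as $g\mapsto p\cdot\trans(g)$, where $\trans$ is translation by $(p-1)y$ (this is \cite[Proposition~2.9]{ben}, and is what the paper invokes). Since the Frobenius is an equivalence and $\pi_n(M,y)$ is uniquely $p$-divisible by \Cref{thm:simple}, the translation $+(p-1)y\colon M_y\to M_{py}$ is itself an equivalence of components. Passing to the cofinal subsequence $M_x\to M_{px}\to M_{p^2x}\to\cdots$ of the telescope, the $j$-th transition map is $+(p-1)p^jx$, which is exactly this translation with $y=p^jx$; hence $M_x\to T_x$ is an equivalence.
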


\begin{proof}
Since $M$ is a simple space by \Cref{thm:simple}, we have
$M[-x] \simeq M^\gp \simeq T$ by~\cite[Corollary 7]{thomas}. 
 We want to show that the map $(M, x) \to (T,x)$ is an equivalence of corresponding connected components of $M$ and $T$. 

Since $M$ is $p$-perfect, the composition $(M,x)\xrightarrow{\Delta}(M,x)^p\xrightarrow{\mu} (M,px)$ is an equivalence. On $G = \pi_n(M, x)$ this composition induces 
$G \to G \times \ldots \times G \to H$ which was computed in \cite[Proposition 2.9]{ben}: this composition sends every element $g$ to  $p \cdot \trans(g)$ where $\trans \colon 
(M, x) \to (M, px)$ is the translation by $(p-1)x$. Since the composition is an isomorphism of groups and $G$ is uniquely $p$-divisible for $n \geqslant 1$ by \Cref{thm:simple}, the map $\trans \colon (M, x) \xrightarrow{+(p-1)x} (M, px)$ must be an equivalence as well. 
Observe that $T = \tel^x(M) \simeq \tel^{(p-1)x}(M)$.
Hence the map $(M, x) \to (T,x)$ is an equivalence of components, as we wanted to show.
\end{proof}

\begin{proof}[Proof of \Cref{thm:group completion}]
The theorem follows immediately from \Cref{prop: one generator} applied to the $p$-perfect locally monogenic monoid $M[\frac{1}{p}]$, thanks to \Cref{prop: invert anything}.
\end{proof}

\begin{ex}
    Let $M=\Fin^\simeq$. In this case, $M^\gp\simeq \Sph$ is the sphere spectrum, so \Cref{thm:group completion} tells us that 
    \[\Fin^\simeq[\frac{1}{p}]\setminus\{0\} \, \simeq \, \Sph[\frac{1}{p}]\times_{\Z[\frac{1}{p}]}(\mathbb N[\frac{1}{p}]\setminus\{0\}), \]
    i.e., the union of positive components of $\Sph[\frac{1}{p}]$. 
\end{ex}



We already know that $p$-perfection of $M$ cannot be computed as $\tel_p(M)$ for general $M$, see~\Cref{prop:counterex}. Now we can deduce a more general statement, similar to the group-completion situation\footnote{But in contrast to the group-complet\emph{e} situation.}.

\begin{cor}\label{cor: not filt colim}
For a general commutative monoid $M$ its $p$-perfection cannot be expressed as a filtered colimit of maps on $M$. 
\end{cor}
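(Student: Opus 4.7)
The plan is to exhibit a concrete counterexample, namely $M = \F$, and to deduce the corollary by combining the structure of $\F[\frac{1}{p}]$ from \Cref{thm:group completion} with the fact that $\F$ is a $1$-groupoid. The point is that $\F$ has no higher homotopy whatsoever at any basepoint, so any filtered colimit of copies of $\F$ also has trivial higher homotopy, whereas $\F[\frac{1}{p}]$ does not.

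More precisely, the forgetful functor $\CMon \to \Ss$ preserves filtered colimits, and $\pi_n$ commutes with filtered colimits in $\Ss$. Hence if $\F[\frac{1}{p}]$ were equivalent as a commutative monoid to a filtered colimit $\colim_I \F$ with every vertex equal to $\F$ and all transition maps being self-maps, then $\pi_n(\F[\frac{1}{p}], x)$ would vanish for every $n \geq 2$ and every basepoint $x$, being a filtered colimit of zero groups. On the other hand, \Cref{thm:group completion} identifies the component of $1 \in \F[\frac{1}{p}]$ with the component of $1 \in \Sph[\frac{1}{p}]$, whose higher homotopy groups are $\pi_*^s[\frac{1}{p}]$. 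Since $p \geq 2$ has only finitely many prime factors while stable stems contain torsion of arbitrarily many primes, there exists $n \geq 2$ with $\pi_n^s[\frac{1}{p}] \neq 0$; for instance $\pi_3^s[\frac{1}{2}] = \Z/3$, and $\pi_1^s[\frac{1}{p}] = \Z/2$ whenever $p$ is odd. This contradiction finishes the proof.

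I do not foresee a serious technical obstacle. The mildest subtlety is interpreting the phrase ``filtered colimit of maps on $M$'', which I read as a filtered diagram $I \to \CMon$ that is constant equal to $M$ on objects; this interpretation is robust to whether one computes the colimit in $\CMon$ or in $\Ss$, since the forgetful functor preserves filtered colimits. Under any such interpretation, the two observations above combine immediately to give the desired contradiction.
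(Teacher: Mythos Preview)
Your argument is essentially identical to the paper's: both take $M=\F$, use that filtered colimits preserve $1$-truncatedness, and invoke \Cref{thm:group completion} to see that $\F[\frac{1}{p}]$ has nontrivial higher homotopy via $\Sph[\frac{1}{p}]$. One small slip: your illustrative example $\pi_1^s[\frac{1}{p}]=\Z/2$ for odd $p$ is in degree $n=1$, not $n\geq 2$ as you need; replace it by $\pi_2^s[\frac{1}{p}]=\Z/2$ (since $\pi_2^s=\Z/2$) and the argument goes through unchanged.
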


\begin{proof}
    If it could, then for $1$-truncated $M$, $M[\frac{1}{p}]$ would also be 1-truncated. However we know that $M^\gp[\frac{1}{p}]$ need not be $n$-truncated for any $n$: $M=\F$ serves as an example.
\end{proof}

We will see how to compute the $p$-perfection functor in the next section.

\section{$p$-perfection via $+$-construction}\label{section:+-construction}

The goal of this section is to give a $+$-construction model for $p$-perfection, analogously to the McDuff-Segal group completion theorem which describes group completion in terms of $+$-constructions. More precisely, we prove: 
\begin{thm}\label{thm:plus}
    Let $M$ be a commutative monoid. There are canonical maps of commutative monoids $$\tel_p(M)^+\to \tel_p(M^+)\to M[\frac{1}{p}]$$ that are both equivalences. 
\end{thm}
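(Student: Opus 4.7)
The plan is to establish the two equivalences in $\tel_p(M)^+\to\tel_p(M^+)\to M[\frac{1}{p}]$ separately. The first map is essentially formal: Hoyois's $+$-construction $(-)^+\colon\CMon\to\CMon$ is a Bousfield localization, hence a left adjoint preserving colimits, and the Frobenius $p$ is a natural transformation of the identity functor on $\CMon$, so $(-)^+$ intertwines the $p$-telescope with itself. Provided $+$-local monoids are closed under filtered colimits in $\CMon$ (which is immediate from the characterization in \cite{hoyois+} via a hypoabelian-$\pi_1$ plus homology-equivalence condition, both preserved under filtered colimits), $\tel_p(M^+)$ is already $+$-local and the canonical comparison map is an equivalence.

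For the second equivalence I would argue in two steps. \textbf{Step 1:} Identify $\tel_p(M^+)$ with $M^+[\frac{1}{p}]$. By the defining property of $(-)^+$ in \cite{hoyois+}, the monoid $M^+$ has hypoabelian fundamental groups at every basepoint; \Cref{prop: sigma^3}(2) then shows $\tel_p(M^+)$ is $p$-perfect, and \Cref{prop: sigma^3}(1) identifies it canonically with the $p$-perfection of $M^+$. \textbf{Step 2:} Show that $M\to M^+$ induces an equivalence $M[\frac{1}{p}]\simeq M^+[\frac{1}{p}]$. It suffices to prove that every $p$-perfect commutative monoid $N$ is $+$-local. By \Cref{thm:simple}, $N$ is a simple space at every component, so $\pi_1(N,n)$ is abelian and in particular contains no nontrivial perfect normal subgroup; since Hoyois's $+$-construction acts componentwise as Quillen's classical $+$-construction, which is the identity on simple components, $N\to N^+$ is an equivalence. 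Combining the two steps, $\tel_p(M^+)\simeq M^+[\frac{1}{p}]\simeq M[\frac{1}{p}]$, and one checks directly that this equivalence agrees with the canonical map in the statement.

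The main obstacle is extracting from \cite{hoyois+} the three inputs used above: that $M^+$ has hypoabelian $\pi_1$ at every basepoint, that the subcategory of $+$-local monoids is closed under filtered colimits in $\CMon$, and that the $+$-construction is the identity on simple commutative monoids. Depending on the precise definition adopted for $(-)^+$ on $\CMon$, the last of these may require a small verification (to upgrade ``simple underlying space is $+$-local as a space'' to ``simple commutative monoid is $+$-local in $\CMon$''). Once these inputs are in hand, the rest is a quick assembly from \Cref{thm:simple} and \Cref{prop: sigma^3}.
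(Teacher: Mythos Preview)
Your argument for the second map is essentially identical to the paper's: the paper also shows $M[\frac{1}{p}]\simeq M^+[\frac{1}{p}]$ using that $p$-perfect monoids are simple (\Cref{thm:simple}) hence acyclic-local, and then identifies $\tel_p(M^+)\simeq M^+[\frac{1}{p}]$ via \Cref{prop: sigma^3}. So that half is fine.

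There is, however, a genuine gap in your treatment of the first map. You assert that $+$-local commutative monoids are closed under filtered colimits, claiming this is immediate because ``hypoabelian $\pi_1$'' is preserved under filtered colimits. It is not: the paper itself remarks (just after \Cref{prop: sigma^3}) that a filtered colimit of hypoabelian groups need not be hypoabelian, and gives an explicit example with free groups. So the sentence ``Provided $+$-local monoids are closed under filtered colimits in $\CMon$ (which is immediate \dots)'' is false as stated, and without it your ``essentially formal'' argument does not go through.

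The fix is already contained in your own Step~1 for the second map, and this is exactly how the paper proceeds: establish the second equivalence \emph{first}, so that $\tel_p(M^+)\simeq M[\frac{1}{p}]$ is known to be $p$-perfect, hence simple, hence hypoabelian. Only then does one have enough to handle the first map. At that point your left-adjoint argument works (the colimit $\tel_p(M^+)$ in $\CMon$ is now known to be $+$-local, so it computes the colimit in the $+$-local subcategory). The paper takes a slightly different route for this final step: rather than invoking colimit-preservation of $(-)^+$, it observes that $M\to M^+$ is a homology isomorphism, homology commutes with filtered colimits, so $\tel_p(M)\to\tel_p(M^+)$ is a homology isomorphism, and hence $\tel_p(M)^+\to\tel_p(M^+)$ is an equivalence between acyclic-local objects. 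Either endgame is fine once the order of the two equivalences is corrected.
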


Our reference for the $+$-construction is Hoyois' note \cite{hoyois+}. As a consequence of his description of the $+$-construction, we find that it defines a functor $(-)^+:\Ss\to \Ss$ which preserves products, and thus also induces a functor $(-)^+:\CMon\to \CMon$.

\begin{lm}
    Let $M\in\CMon$ be a commutative monoid. The map $M\to M^+$ induces an equivalence $M[\frac{1}{p}]\to M^+[\frac{1}{p}]$.

    In particular, the canonical map $\tel_p(M^+)\to M[\frac{1}{p}]$ obtained from $M[\frac{1}{p}]$ being simple and hence acyclic-local is an equivalence. 
\end{lm}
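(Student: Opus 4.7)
The plan is to establish the first claim by showing that every $p$-perfect commutative monoid is acyclic-local in $\CMon$, and then to derive the second claim from the first together with \Cref{prop: sigma^3}.

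For the first claim, I would argue as follows. A $p$-perfect commutative monoid $N$ is, by \Cref{thm:simple}, a simple space; in particular each $\pi_1(N,n)$ is abelian, hence contains no nontrivial perfect subgroup. By Hoyois' description of the $+$-construction as the localization of $\Ss$ at acyclic maps \cite{hoyois+}, this means $N$ is acyclic-local as a space. Since $(-)^+$ preserves products it lifts to an endofunctor of $\CMon$, and because the forgetful functor $\CMon\to\Ss$ preserves limits, the acyclic-local objects in $\CMon$ are exactly those whose underlying space is acyclic-local. Hence $N$ is acyclic-local in $\CMon$, and restriction along $M\to M^+$ induces an equivalence $\Map_{\CMon}(M^+, N)\xrightarrow{\simeq}\Map_{\CMon}(M, N)$ for every $p$-perfect $N$. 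By the universal property of $(-)[\frac{1}{p}]$, the map $M[\frac{1}{p}]\to M^+[\frac{1}{p}]$ is then an equivalence.

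For the second claim, I would note that by the very construction of $(-)^+$, the fundamental group $\pi_1(M^+, x)$ is hypoabelian for every basepoint $x\in M^+$. Part (2) of \Cref{prop: sigma^3} then yields that $\tel_p(M^+)$ is $p$-perfect, and part (1) of the same proposition identifies it with $M^+[\frac{1}{p}]$. Composing with the equivalence from the first claim gives the desired chain $\tel_p(M^+) \simeq M^+[\frac{1}{p}] \simeq M[\frac{1}{p}]$.

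The main point to verify carefully is that the universal property of $(-)^+$ and the notion of acyclic-local object transfer cleanly from $\Ss$ to $\CMon$. This is essentially formal, since $(-)^+$ is product-preserving and the forgetful functor $\CMon\to\Ss$ preserves limits, so the orthogonality classes defining acyclic-local objects pull back nicely. Given this, the proof reduces to a direct combination of \Cref{thm:simple}, Hoyois' characterization of the $+$-construction, and \Cref{prop: sigma^3}.
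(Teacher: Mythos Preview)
Your proposal is correct and follows essentially the same approach as the paper. The paper phrases the first claim more tersely as ``$+$-construction commutes with $p$-perfection as left adjoints of commuting functors,'' but this unpacks to precisely your argument: the key input in both is that $p$-perfect monoids are simple (via \Cref{thm:simple}) and hence acyclic-local, so restriction along $M\to M^+$ is an equivalence on maps into them; the second claim is then derived from \Cref{prop: sigma^3} in both cases.
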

\begin{proof}
    Observe that $+$-construction commutes with $p$-perfection as left adjoints of commuting functors. The claim then follows directly from the universal properties using that $p$-perfect commutative monoids are simple and hence acyclic-local. 

    The ``In particular'' part follows from the first part and from \Cref{prop: sigma^3}. 
\end{proof}
We can now prove the theorem:
\begin{proof}[Proof of \Cref{thm:plus}]
    The previous lemma deals with the second map. In particular, $\tel_p(M^+)$ is hypoabelian and so the map $M\to M^+$ induces a map $\tel_p(M)^+\to \tel_p(M^+)$. 

    Since $M\to M^+$ is a homology isomorphism, and homology commutes with filtered colimits, we find that this map is a homology isomorphism, and hence (both spaces are acyclic-local) an equivalence, as was to be proved. 
\end{proof}


As a straightforward corollary of \Cref{thm:plus}, we get an alternative approach to computing the $1$-component of the $p$-inverted K-theory space $\Omega_1^\infty (K(R) [\frac{1}{p}])$.

 \begin{cor}\label{cor: K-theory}
     Let $R$ be a ring. Then there is a canonical isomorphism
     \[ \colim_k H_*(\mathrm{BGL}_{p^k}(R); \Z) \xrightarrow{\simeq} H_*(\Omega_1^\infty (K(R) [\frac{1}{p}]); \Z)\]
     where colimit is induced by the map 
     \begin{gather*}
         \mathrm{GL}_{p^k}(R) \to \mathrm{GL}_{p^{k+1}}(R) \\
         A \mapsto \begin{pmatrix}
A & 0 & 0  \\
0 & \dots & 0  \\
0 & 0 & A
     \end{pmatrix}
     \end{gather*}
     
 \end{cor}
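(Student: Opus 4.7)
The plan is to apply \Cref{thm:plus} to $M = \Proj(R)^\simeq$, whose group completion is $M^\gp \simeq K(R)$, and restrict the resulting equivalence of commutative monoids $\tel_p(M)^+ \simeq M[\frac{1}{p}]$ to the component of $[R]$ on both sides. Since the $+$-construction on commutative monoids is computed on underlying spaces, preserves connected components, and is an integral homology isomorphism on each component, this yields a canonical isomorphism
\[ H_*\bigl(\tel_p(M)_{[R]}; \Z\bigr) \xrightarrow{\simeq} H_*\bigl(M[\tfrac{1}{p}]_{[R]}; \Z\bigr), \]
so the task reduces to identifying each side with the corresponding side of the statement.

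For the left-hand side, filtered colimits in $\CMon$ are created on underlying spaces, so $\tel_p(M)$ is the sequential colimit of $\Proj(R)^\simeq$ along the Frobenius $P \mapsto P^{\oplus p}$. On the rank-$p^k$ component this Frobenius is the map $B\mathrm{GL}_{p^k}(R) \to B\mathrm{GL}_{p^{k+1}}(R)$ induced by the block-diagonal inclusion $A \mapsto A^{\oplus p} = \mathrm{diag}(A,\dots,A)$, which is precisely the map in the statement. Chasing the component of $[R]$ through the telescope gives $\tel_p(M)_{[R]} \simeq \colim_k B\mathrm{GL}_{p^k}(R)$, and since integral homology commutes with filtered colimits we obtain the left-hand side of the corollary.

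For the right-hand side, I would use \Cref{thm:group completion} to identify $M[\frac{1}{p}]_{[R]}$ with the component of $[R]$ in $M^\gp[\frac{1}{p}] \simeq K(R)[\frac{1}{p}]$, namely $\Omega_1^\infty K(R)[\frac{1}{p}]$; the local monogenicity hypothesis needed there holds when $\Spec R$ is connected, by \Cref{prop: Proj connected}, and for general $R$ the same conclusion on the rank-one component can be obtained by applying \Cref{prop: one generator} directly to $x = [R]$, as the rank-one free module has nowhere vanishing rank.

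I do not expect any serious obstacle here: the substantive content is already packaged in \Cref{thm:plus}, which converts the combinatorially concrete telescope into the categorically defined $p$-perfection, and in \Cref{thm:group completion}, which identifies the non-zero components of the $p$-perfection with those of the group completion. The only thing to verify — essentially bookkeeping — is that the two identifications above are compatible with the canonical comparison map induced by $M \to M^\gp$, which follows from naturality of the telescope, of the $+$-construction, and of the unit $M \to M[\frac{1}{p}]$.
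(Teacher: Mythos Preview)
Your proposal is correct and is precisely the argument the paper leaves implicit: apply \Cref{thm:plus} to $M=\Proj(R)^\simeq$, use that the $+$-construction is a homology isomorphism componentwise, identify the $[R]$-component of $\tel_p(M)$ with $\colim_k B\mathrm{GL}_{p^k}(R)$ along the block-diagonal maps, and identify the $[R]$-component of $M[\frac{1}{p}]$ with $\Omega_1^\infty K(R)[\frac{1}{p}]$ via \Cref{thm:group completion}/\Cref{prop: one generator}.

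One small correction in your justification for general $R$: the reason \Cref{prop: one generator} applies with $x=[R]$ is not that $R$ has nowhere-vanishing rank (that is the hypothesis of \Cref{prop: Proj connected}, which goes in the other direction), but simply that every finitely generated projective module is a direct summand of a free one, so inverting $[R]$ already inverts every class and hence $M[-[R]]\to M^\gp$ is an equivalence for arbitrary $R$; after applying $[\frac{1}{p}]$ (which commutes with $[-x]$ and with $(-)^\gp$ as left adjoints), the hypothesis of \Cref{prop: one generator} is satisfied for $M[\frac{1}{p}]$.
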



 \begin{rmk}
\Cref{cor: K-theory} could be useful, for example, if the direct system \[\{\dots \to \mathrm{GL}_{p^k}(R) \to \mathrm{GL}_{p^{k+1}}(R) \to \dots\}\] is simpler for computations than the standard direct system \[\{\dots \to\mathrm{GL}_n(R) \to \mathrm{GL}_{n+1}(R)\to \dots\}.\] In the case when $R = \mathbb F_q$, Quillen computed all homology groups $H_*(\mathrm{BGL}_{n}(R); \Z[\frac{1}{p}])$ unstably \cite[Theorem 3]{quillen}, so taking a different direct system does not simplify the computation of $K(\mathbb F_q)$. However, for other rings $R$ it could possibly be useful for computing $p$-inverted K-theory of $R$.
 \end{rmk}

 \section{Comparison with an alternative construction}\label{section:module}

In this section we show that the following three informally described ways of inverting $p$ on a commutative monoid agree: inverting $p$ among commutative monoids, inverting it among spaces where one can add points to themselves any number of times, and inverting it among spaces where we can add points any $p$-power number of times. 

To do that, we observe that a result similar to \Cref{thm:plus} appears in  \cite[Proposition 3.2]{algcob}. We describe here the relationship between the two and how we could in principle deduce the results from the previous section from \textit{loc. cit}. Before doing so, let us recall a special case of \textit{loc. cit.} for the convenience of the reader:
\begin{prop}[{\cite[Proposition 3.2, Remark 3.3]{algcob}}]
    Let $Q$ be an $\mathbb E_2$-monoid in spaces and $q\in Q$. For every $Q$-module $E$, the canonical map $$\tel_q(E)\to E[q^{-1}]$$ is acyclic. 
\end{prop}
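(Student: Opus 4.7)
The plan is to adapt the proof of \Cref{thm:plus} to the module-theoretic setting: show that the map is a $\pi_0$-isomorphism, factor it through the plus-construction $\tel_q(E)^+$, and identify the resulting space with $E[q^{-1}]$. Since the first factor $\tel_q(E) \to \tel_q(E)^+$ is acyclic by definition of the $+$-construction, the composite will then be acyclic as well, which is what we want.

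First, I would check that the canonical map is an isomorphism on $\pi_0$: both sides compute the discrete localization of the $\pi_0(Q)$-set $\pi_0(E)$ at $[q]\in\pi_0(Q)$, since on discrete modules telescoping by $q$ already inverts it. Next, I would invoke Hoyois' description of $(-)^+$, which is product-preserving and commutes with filtered colimits, hence lifts to a functor on modules over any $\mathbb{E}_n$-monoid. This yields a factorization $\tel_q(E) \to \tel_q(E)^+ \to E[q^{-1}]$ of $Q$-module maps, and it remains to show that the second map is an equivalence of $Q$-modules.

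For that, the key point is to show that $\tel_q(E)^+$ is already $q$-local as a $Q$-module. On $\tel_q(E)$, multiplication by $q$ is homotopic to the shift map, which is an unpointed equivalence. The obstruction to this being a \emph{pointed} equivalence is a loop class at each basepoint, and it is exactly the analogue of the $p^{3\text{-cycle}}$ appearing in \Cref{prop: sigma^3}. The $\mathbb{E}_2$-structure on $Q$ provides the Eckmann--Hilton-type swap of two copies of ``multiply by $q$'' needed to place this obstruction inside an iterated-commutator subgroup, and iterating the swap along successive stages of the telescope pushes it into a perfect subgroup, which is annihilated by $(-)^+$. Combined with the universal property of $\tel_q(E)$ among $Q$-modules and the acyclic-locality of any $q$-local target, this identifies $\tel_q(E)^+$ with $E[q^{-1}]$.

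The main obstacle, as in the proof of \Cref{prop: sigma^3}, is pinning down this perfect-subgroup obstruction precisely; an $\mathbb{E}_1$-structure on $Q$ produces no self-commutation of $q$, hence no braiding and no obstruction to land in a commutator, so one does not expect any acyclic identification in that generality. The $\mathbb{E}_2$-hypothesis is exactly the minimal structure supplying a coherent braiding of two copies of multiplication by $q$, and one anticipates that the combinatorial bookkeeping --- tracking along which stage of the telescope the obstruction becomes perfect --- runs parallel to the $n\geq 3$ argument in \Cref{prop: sigma^3}.
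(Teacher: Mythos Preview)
The paper does not prove this proposition: it is quoted verbatim from \cite[Proposition~3.2, Remark~3.3]{algcob} as a recollection, and the surrounding section only uses it as an input. So there is no ``paper's own proof'' to compare against; I can only assess your argument on its own merits.

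Your outline mirrors the proof of \Cref{thm:plus}, and the first steps (the $\pi_0$-isomorphism, and the lift of $(-)^+$ to $Q$-modules via product-preservation) are fine. The gap is in the core step. You assert a factorisation $\tel_q(E)\to \tel_q(E)^+\to E[q^{-1}]$, but such a factorisation exists only if $E[q^{-1}]$ is acyclic-local, i.e.\ hypoabelian at every basepoint. In the $\mathbb E_\infty$ setting this is supplied by \Cref{thm:simple} (a $p$-perfect commutative monoid is simple), and that result is genuinely an $\mathbb E_\infty$ argument: it uses the full $\Sigma_p$-symmetry of the multiplication. For a bare $Q$-module over an $\mathbb E_2$-monoid no such simplicity or hypoabelianness is available, and you do not supply one. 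The same issue recurs when you try to identify $\tel_q(E)^+$ with $E[q^{-1}]$ via the universal property: even if you succeed in showing that $q$ acts invertibly on $\tel_q(E)^+$, verifying the universal property requires $\Map_Q(\tel_q(E)^+,F)\simeq \Map_Q(\tel_q(E),F)$ for every $q$-local $F$, which again needs $F$ to be acyclic-local.

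Your closing paragraph correctly identifies the obstruction-tracking as the ``main obstacle'', but the sketch you give --- that the $\mathbb E_2$ braiding pushes the obstruction into an iterated-commutator and hence perfect subgroup --- is exactly the point that needs a proof, and it is not the same combinatorics as the $A_{p^3}$ argument in \Cref{prop: sigma^3}. In short, the strategy is the right shape, but the two load-bearing claims (hypoabelianness of $q$-local modules, and perfectness of the braiding obstruction) are asserted rather than proved; the argument in \cite{algcob} does not route through $(-)^+$ in this way.
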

Here, $\tel_q(E)$ is the colimit of the diagram $E\xrightarrow{q\cdot-} E\xrightarrow{q\cdot -}E\to \dots$ which can canonically be made a diagram of $Q$-modules since $Q$ is $\mathbb E_2$, and $E[q^{-1}]$ is the initial $Q$-module below $E$ on which $q$ acts invertibly. 

The relevant connection here is as follows: let $Q = \End(\id_{\CMon})\simeq (\Fin^\times)^\simeq$ be the endomorphism monoid of the identity on $\CMon$ (or the full submonoid thereof spanned by powers of $p$). There is a forgetful functor $$U:\CMon\to \Mod_{Q}(\Ss)$$
This forgetful functor is what we described in the introduction informally as ``remembering only how to add elements of a commutative monoid to themselves (a $p$-power number of times)''. 

For $M\in \CMon$, $\tel_p(M)$ as defined in \Cref{cons:telp} agrees with $\tel_{p_Q}(M)$ where $M$ is considered as a $Q$-module and $p_Q\in Q$ is the element corresponding to multiplication by $p$.

In particular, this forgetful functor sends $p$-perfect commutative monoids to modules on which $p_Q\in Q$ acts invertibly. Thus we get a commutative square of categories which induces a Beck-Chevalley square: 
\[\begin{tikzcd}
	\CMon & {\Mod_Q} \\
	{\CMon[\frac{1}{p}]} & {\Mod_{Q[p^{-1}]}}
	\arrow[from=1-1, to=1-2]
	\arrow[from=1-1, to=2-1]
	\arrow[shorten <=5pt, shorten >=5pt, Rightarrow, from=1-2, to=2-1]
	\arrow[from=1-2, to=2-2]
	\arrow[from=2-1, to=2-2]
\end{tikzcd}\]
which is, concretely, for a commutative monoid $E$, the map $E[p_Q^{-1}]\to E[\frac{1}{p}]$ obtained from considering the map $E\to E[\frac{1}{p}]$ as a map of $Q$-modules. Furthermore, $\tel_p(E)$ is the same here as in our earlier considerations and we get a commutative triangle: 
\[\begin{tikzcd}
	& {\tel_p(E)} \\
	{E[p_Q^{-1}]} & {E[\frac{1}{p}]}
	\arrow[from=1-2, to=2-1]
	\arrow[from=1-2, to=2-2]
	\arrow[from=2-1, to=2-2]
\end{tikzcd}\]
As a consequence of our work, the right hand map is acyclic, and \cite[Proposition 3.2]{algcob} states that the left hand map also is, therefore the lower horizontal map also is. 

The following question is then natural: \textit{is the bottom horizontal map an equivalence?} We prove below that the answer is yes, which provides an alternative proof of \Cref{thm:plus} (as well as \Cref{prop: sigma^3}). This also justifies the intuition that we explained in the beginning of the section, by taking $Q= \Fin^\times$ and the full sub-monoid spanned by powers of $p$.

We will need the following lemmas: 
\begin{lm}\label{lm:prod}
    Let $Q$ be an $\mathbb E_2$-monoid and $q\in Q$. The functor $[q^{-1}]: \Mod_Q\to \Mod_{Q[q^{-1}]}$ preserves finite products, and in particular the same is true for the composite functor $$\Mod_Q\to~\Mod_{Q[q^{-1}]}\to~\Mod_Q$$
\end{lm}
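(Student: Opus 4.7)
The plan is to identify the localization functor $[q^{-1}]$ with the telescope functor $\tel_q$, so that preservation of finite products will follow from filtered colimits commuting with finite products in $\Ss$. Since $Q$ is $\mathbb E_2$, the element $q$ defines a natural transformation of the identity functor on $\Mod_Q$, so the sequential diagram $E \xrightarrow{q} E \xrightarrow{q} \cdots$ lives in $\Mod_Q$ and we may form its colimit $\tel_q(E)$ there. Multiplication by $q$ on $\tel_q(E)$ acts as the shift on this sequential colimit, which is always an equivalence, so $\tel_q$ factors through the full subcategory $\Mod_{Q[q^{-1}]}$.

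The main step is to verify that $\tel_q \simeq [q^{-1}]$ as functors into $\Mod_{Q[q^{-1}]}$. I would do this by checking that for every $F \in \Mod_{Q[q^{-1}]}$ the canonical map induces an equivalence
\[ \Map_{\Mod_Q}(\tel_q(E), F) \xrightarrow{\simeq} \Map_{\Mod_Q}(E, F). \]
Since $\tel_q(E)$ is a sequential colimit, the source rewrites as the filtered limit $\lim_n \Map_{\Mod_Q}(E, F)$, whose transition maps are precomposition with $q_E \colon E \to E$. By $Q$-linearity of morphisms in $\Mod_Q$ (equivalently, naturality of $q\colon \id \Rightarrow \id$), precomposition with $q_E$ agrees with postcomposition with $q_F$, and the latter is an equivalence since $F$ is $q$-inverted. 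Hence every transition in the tower is an equivalence and the limit collapses to $\Map_{\Mod_Q}(E, F)$.

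Given the identification $\tel_q \simeq [q^{-1}]$, the first assertion is immediate: the forgetful $\Mod_Q \to \Ss$ preserves filtered colimits, and filtered colimits commute with finite products in $\Ss$, so $\tel_q$ preserves finite products. The ``in particular'' statement then follows since the forgetful $\Mod_{Q[q^{-1}]} \to \Mod_Q$ is a right adjoint. The main obstacle is to establish the identification $\tel_q \simeq [q^{-1}]$; this is formally slightly stronger than the ``acyclic'' relation from \cite{algcob} mentioned just before the lemma, so the proof hinges on verifying carefully that the $\mathbb E_2$-hypothesis genuinely provides both the structure needed to make $q_E$ a morphism of $\Mod_Q$ and the coherent naturality homotopy $\phi \circ q_E \simeq q_F \circ \phi$ that drives the mapping-space computation.
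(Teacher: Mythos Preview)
Your approach has a genuine gap, and the gap is exactly where you flag ``the main obstacle'': the identification $\tel_q \simeq [q^{-1}]$ is false in general. The specific error is the sentence ``Multiplication by $q$ on $\tel_q(E)$ acts as the shift on this sequential colimit.'' The map $q_{\tel_q(E)}$ is the colimit of the ladder
\[\begin{tikzcd}
	E & E & E & \dots \\
	E & E & E & \dots
	\arrow["q", from=1-1, to=1-2]
	\arrow["q"', from=1-1, to=2-1]
	\arrow["q", from=1-2, to=1-3]
	\arrow["q"', from=1-2, to=2-2]
	\arrow["q"', from=1-3, to=2-3]
	\arrow["q"', from=2-1, to=2-2]
	\arrow["q"', from=2-2, to=2-3]
\end{tikzcd}\]
whose squares are filled by the \emph{naturality} homotopy of $q\colon\id\Rightarrow\id$ at the morphism $q_E$; unwinding, this homotopy is the $\mathbb E_2$-braiding $\beta_{q,q}$ acting on $E$, which is a typically nontrivial loop at $q^2\in Q$. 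The shift map, by contrast, uses the \emph{identity} homotopy $q_E\circ q_E = q_E\circ q_E$. These produce different maps on the colimit in general. This is precisely the phenomenon explained in \Cref{rmk:counterexfurther}.

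There is a concrete counterexample inside the paper. Take $Q=(\Fin^\simeq,\times)$, $q$ a $p$-element set, and $E=\Fin^\simeq$ regarded as a $Q$-module via the forgetful functor $\CMon\to\Mod_Q$. The paper notes in \Cref{section:module} that $\tel_{p_Q}(E)$ agrees with $\tel_p(\Fin^\simeq)$, and \Cref{lm:subtle} identifies the $p_Q$-action with the Frobenius $p$. By \Cref{prop:counterex}, $p$ is \emph{not} an equivalence on $\tel_p(\Fin^\simeq)$, so $\tel_q(E)$ is not $q$-inverted and hence cannot be $E[q^{-1}]$. (Your universal-property paragraph is fine and is indeed used in \Cref{prop: sigma^3}\,(\ref{item: tel p-perfect}); it only shows that $\tel_q(E)$ corepresents maps into $q$-inverted modules, not that it lies among them.) This is also why \cite{algcob} only obtains ``acyclic'' and not ``equivalence''.

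The paper's proof avoids this obstruction entirely: it shows that the class of $q$-local $Q$-modules is closed under the cartesian internal hom $\hom(Y,-)$, hence the localization $[q^{-1}]$ is a symmetric monoidal localization for the cartesian structure, and a product of $q$-local modules is $q$-local. Preservation of finite products then follows formally. A direct mapping-space version of this argument is recorded in the remark immediately following the proof.
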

\begin{proof}


For the duration of the proof, let $\hom$ denote the cartesian internal hom on $\Mod_Q$. Since $Q$ is the free $Q$-module on a point, we find that the underlying space of $\hom(X,M)$ is $\map_Q(Q\times X,M)$.

Since $Q$ is $\mathbb E_2$, we have a natural transformation $q:\id\to \id$ on $\Mod_Q$ which is a $Q$-linear lift of the natural map $q: \mathrm{forget}\to\mathrm{forget} $. By naturality, we get a homotopy between $q_X^*:\Map_Q(X,M)\to \Map_Q(X,M)$ and $(q_M)_*: \Map_Q(X,M)\to \Map_Q(X,M)$, which shows that $q$ acts invertibly on a $Q$-module $M$ if and only if $M$ is $q_X$-local for every $X\in\Mod_Q$. 

We furthermore have, for $X,Y\in\Mod_Q$, that $q_X\times\id_Y$ commutes with $\id_X\times q_Y$ and that their composite is $q_{X\times Y}$. In particular, if $M$ is $q_{X\times Y}$-local, it is also $q_X\times\id_Y$-local. It follows that if $M$ is $q_X$-local for all $X\in\Mod_Q$, then for any $Y\in\Mod_Q$, $\hom(Y,M)$ is also $q_X$-local for all $X$: indeed $\Map_Q(X,\hom(Y,M))\simeq \map_Q(X\times Y, M)$ and $q_X$ on the left corresponds to $q_X\times\id_Y$ on the right.

It follows that the localization $[q^{-1}]: \Mod_Q\to \Mod_{Q[q^{-1}]}$ is compatible with the symmetric monoidal structure in the sense of \cite[Definition 2.2.1.6]{HA}. There is a unique induced symmetric monoidal structure on $\Mod_{Q[q^{-1}]}$ for which $[q^{-1}]$ is strong symmetric monoidal. To conclude, we are left with proving that it is cartesian, or, equivalently, that the forgetful functor $\Mod_{Q[q^{-1}]}\to \Mod_Q$ is strong symmetric monoidal (as opposed to lax). This follows from the fact that a product of $q$-local $Q$-modules is still $q$-local.

\end{proof}
\begin{rmk}
    For convenience, let us spell out a proof of the above without resorting to symmetric monoidal localizations: for $X,Y\in\Mod_Q$ and $M\in\Mod_{Q[q^{-1}]}$, we have a chain of equivalences, where the second and second to last equivalences follow from the observation that $\hom(Z,M)$ is $q$-local for every $Z$: $$\Map_Q(X\times Y,M)\simeq \Map_Q(X,\hom(Y,M))\simeq \Map_Q(X[q^{-1}],\hom(Y,M))\simeq \Map_Q(X[q^{-1}]\times Y,M)$$
   $$ \simeq \Map_Q(Y, \hom(X[q^{-1}],M))\simeq \Map_Q(Y[q^{-1}],\hom(X[q^{-1}],M))\simeq \Map_Q(X[q^{-1}]\times Y[q^{-1}], M).$$
    To conclude from there, we crucially use the fact that $X[q^{-1}]\times Y[q^{-1}]$ is $q$-local. 
\end{rmk}

It follows that $[q^{-1}]$ induces a functor $\CMon(\Mod_Q)\to \CMon(\Mod_{Q[q^{-1}]})$. To conclude, we will need a further lemma. To state it, note that $Q$ acts on commutative monoids in any category with finite products, that is, for every such $C$ we have a forgetful functor $\CMon(C)\to \Mod_Q(C)$. 
\begin{lm}\label{lm:subtle}
    The forgetful functor $\CMon\to \Mod_Q$ preserves products, so it also lifts to $\CMon(\Mod_Q)$. The latter has a forgetful functor to $\Mod_Q(\Mod_Q)\simeq \Mod_{Q\times Q}$, and this has \emph{two} forgetful functors to $\Mod_Q$. The following composites agree: $$\CMon\to \CMon(\Mod_Q)\to \Mod_{Q\times Q}\rightrightarrows \Mod_Q$$
\end{lm}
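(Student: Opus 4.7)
The plan is to unwind both composites $\CMon \to \Mod_Q$ and to recognize that each recovers the canonical forgetful functor coming from the $Q$-action on commutative monoids. As a preliminary, since the forgetful $\Mod_Q \to \Ss$ is conservative and preserves limits, while $\CMon \to \Ss$ preserves products, the composite $\CMon \to \Mod_Q$ preserves products; hence it admits a lift $\CMon \to \CMon(\Mod_Q)$, and the underlying commutative monoid in $\Ss$ of the image of $M \in \CMon$ is $M$ itself with its original multiplication.

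The key observation is that for any $\infty$-category $C$ with finite products, the functor $\CMon(C) \to \Mod_Q(C)$ assigns to a commutative monoid object $M$ the $Q$-action where $n \in \Fin^\times \subset Q$ acts via the $n$-fold self-addition built from the multiplication and the diagonal $M \to M^n \to M$. Applied to $C = \Ss$, this is precisely how the forgetful $\CMon \to \Mod_Q$ is defined. Applied to $C = \Mod_Q$ and evaluated on the image of some $M \in \CMon$, the ``inner'' $Q$-action coming from $\CMon(\Mod_Q) \to \Mod_Q(\Mod_Q)$ is again $n$-fold self-addition, now computed from the multiplication of $M$ viewed in $\CMon(\Mod_Q)$; by construction of the lift, this multiplication agrees with the multiplication of $M$ in $\CMon$, just upgraded to be $Q$-equivariant. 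Therefore the inner and outer $Q$-actions on $M$ coincide.

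Equivalently, the composite $\CMon \to \CMon(\Mod_Q) \to \Mod_{Q \times Q}$ factors as $\CMon \to \Mod_Q \xrightarrow{\Delta^*} \Mod_{Q \times Q}$, where $\Delta \colon Q \to Q \times Q$ is the diagonal. Each projection $p_i \colon Q \times Q \to Q$ satisfies $p_i \circ \Delta = \id_Q$, so pulling back along either projection recovers $\CMon \to \Mod_Q$, and the two composites agree.

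The main obstacle will be the coherent identification of the inner and outer $Q$-actions in the $\infty$-categorical setting: one has to track that the commutative monoid structure on $M$ viewed in $\CMon(\Mod_Q)$ genuinely is the original one on $M$. This should follow because $\Mod_Q \to \Ss$ creates finite products, so commutative monoids in $\Mod_Q$ are commutative monoids in $\Ss$ equipped with $Q$-equivariant unit and multiplication; naturality of the Frobenius action on $\CMon$ then automatically makes the structure maps of $M$ into $Q$-equivariant ones, which is exactly what the lift $\CMon \to \CMon(\Mod_Q)$ amounts to.
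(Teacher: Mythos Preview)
Your intuition is right, but the third paragraph has the arrows reversed. Restriction along the diagonal $\Delta\colon Q\to Q\times Q$ is a functor $\Mod_{Q\times Q}\to\Mod_Q$, not the other way; and the two forgetful functors $\Mod_{Q\times Q}\rightrightarrows\Mod_Q$ are restriction along the two coordinate \emph{inclusions} $\iota_i\colon Q\to Q\times Q$, $q\mapsto(q,e)$ and $q\mapsto(e,q)$, not along the projections. What you actually want to say is that the composite $\CMon\to\Mod_{Q\times Q}$ factors through restriction along the \emph{multiplication} $\mu\colon Q\times Q\to Q$, i.e.\ $(q_1,q_2)$ acts on $M$ as multiplication by $q_1q_2$; then $\mu\circ\iota_i=\id_Q$ gives $\iota_i^*\circ\mu^*=\id$, and the two composites agree. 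With that correction your computation is the right one.

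The paper's proof takes a different and much shorter route: it simply observes that the statement is a functorial shadow of the Eckmann--Hilton fact that the two forgetful functors $\CMon(\CMon)\rightrightarrows\CMon$ agree, and cites \cite{GGN} for that. Concretely, under $\CMon\simeq\CMon(\CMon)$, applying the product-preserving $U\colon\CMon\to\Mod_Q$ in each slot yields exactly $\CMon\to\Mod_Q(\Mod_Q)$, and the two forgetful functors to $\Mod_Q$ are $U$ applied to the two forgetful functors out of $\CMon(\CMon)$. This is precisely what absorbs the ``main obstacle'' you name in your last paragraph: the coherent identification of the inner and outer actions is nothing other than the coherent Eckmann--Hilton equivalence, and rather than redo that verification by hand the paper outsources it to \cite{GGN}. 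Your approach is more explicit and makes the content visible, but to close the coherence gap rigorously you would essentially be reproving the relevant part of that reference.
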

\begin{proof}
This follows from the fact that the two forgetful functors $\CMon(\CMon)\rightrightarrows \CMon$ agree, which is proved in \cite{GGN}.
\end{proof}

\begin{cor}\label{cor:agreemarc}
    Let $E$ be a commutative monoid. The map $E[p_Q^{-1}]\to E[\frac{1}{p}]$ described above is an equivalence. 
\end{cor}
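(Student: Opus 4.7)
The plan is to endow $E[p_Q^{-1}]$ with a canonical commutative monoid structure, show it is $p$-perfect, and verify that the resulting map $E \to E[p_Q^{-1}]$ in $\CMon$ satisfies the universal property of $E[\frac{1}{p}]$. Lemmas \ref{lm:prod} and \ref{lm:subtle} are tailored for exactly these steps.

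By Lemma \ref{lm:prod}, the localization $[p_Q^{-1}]:\Mod_Q \to \Mod_{Q[p^{-1}]}$ preserves finite products, so it lifts to a functor $\CMon(\Mod_Q) \to \CMon(\Mod_{Q[p^{-1}]})$. Composing this with the lift $\CMon \to \CMon(\Mod_Q)$ from Lemma \ref{lm:subtle} equips $E[p_Q^{-1}]$ with a natural commutative monoid structure refining its $Q[p^{-1}]$-module structure. Applying Lemma \ref{lm:subtle} in turn to this commutative monoid identifies the Frobenius $p:E[p_Q^{-1}]\to E[p_Q^{-1}]$ with the underlying $p_Q$-action, which is invertible by construction; hence $E[p_Q^{-1}]$ is $p$-perfect, yielding a canonical map $E[p_Q^{-1}] \to E[\frac{1}{p}]$ in $\CMon$ that refines the Beck--Chevalley map.

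For the universal property, let $F\in\CMon$ be $p$-perfect. Lemma \ref{lm:subtle} applied to $F$ identifies its invertible Frobenius with the tautological $p_Q$-action, so $F$ lifts to $\CMon(\Mod_{Q[p^{-1}]})$. The universal property of the lifted localization then produces a natural equivalence $\Map_{\CMon(\Mod_Q)}(E[p_Q^{-1}],F)\simeq \Map_{\CMon(\Mod_Q)}(E,F)$. Since any map of commutative monoids automatically respects the tautological $Q=\End(\id_{\CMon})$-action by naturality, the lift $\CMon \to \CMon(\Mod_Q)$ is fully faithful, so these mapping spaces coincide with the corresponding $\CMon$-mapping spaces. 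This gives the universal property of $E[\frac{1}{p}]$ and hence the desired equivalence.

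The main subtlety I anticipate is the bookkeeping around the various lifts: one has to justify that the product-preserving lift of Lemma \ref{lm:prod} is compatible with the forgetful and symmetric monoidal structures used in Lemma \ref{lm:subtle}, so that applying Lemma \ref{lm:subtle} to the object $E[p_Q^{-1}]$ really does identify its intrinsic Frobenius with the $p_Q$-action inherited from $\Mod_Q$. Once this commutativity of lifts is verified, the rest of the argument is a formal application of universal properties.
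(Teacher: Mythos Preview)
Your overall plan---endow $E[p_Q^{-1}]$ with a commutative monoid structure via \Cref{lm:prod}, show it is $p$-perfect, and conclude---is exactly the paper's strategy. The difference lies in how the $p$-perfectness is established, and here your execution has a real gap that the paper sidesteps.

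You apply \Cref{lm:subtle} to the commutative monoid $E[p_Q^{-1}]$. But \Cref{lm:subtle} only tells you that the Frobenius $p$ on a commutative monoid agrees with its \emph{tautological} $p_Q$-action (the one coming from $\CMon\to\Mod_Q$). What you need is that $p$ agrees with the \emph{inherited} $p_Q$-action, i.e.\ the one coming from the localization of $E$'s original $Q$-module structure---that is the action which is invertible by construction. Bridging these two $Q$-actions is precisely the ``commutativity of lifts'' you flag at the end, and you do not prove it. The paper avoids this entirely: it works in $\CMon(\Mod_Q)$, where the Frobenius $p$ on $E[p_Q^{-1}]$ is automatically linear for the inherited $Q$-structure, and then uses the universal property of $E[p_Q^{-1}]$ \emph{in $\Mod_Q$} to reduce the identification $p\simeq p_Q$ to the same identification on $E$, where \Cref{lm:subtle} applies directly. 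This reduction step is the key maneuver you are missing.

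A second gap is your full-faithfulness claim for $\CMon\to\CMon(\Mod_Q)$. Your justification (``any map of commutative monoids respects the tautological $Q$-action by naturality'') is a $\pi_0$-level statement and does not obviously promote to an equivalence of mapping \emph{spaces}. The paper does not need this: once $E[p_Q^{-1}]$ is known to be $p$-perfect and $E\to E[p_Q^{-1}]$ is a $\CMon$-map, one builds an inverse $E[\frac{1}{p}]\to E[p_Q^{-1}]$ from the universal property of $p$-perfection, and checks the two composites are identities using the respective universal properties in $\Mod_Q$ and in $\CMon$ under $E$.
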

\begin{proof}
    Since both $U:\CMon\to \Mod_Q$ and $[q^{-1}]$ preserve products, we find that $E[p_Q^{-1}]$ has a canonical commutative monoid structure, and the unit map $E\to E[p_Q^{-1}]$ a canonical commutative monoid structure as well. 
    
    We claim that $E[p_Q^{-1}]$ is $p$-perfect, from which the statement  will follow. For this, we need to identify $p$ and $p_Q$ on it. We have the following commutative diagram in $\Mod_Q$: 
    \[\begin{tikzcd}
	E & {E[p_Q^{-1}]} \\
	E & {E[p_Q^{-1}]}
	\arrow[from=1-1, to=1-2]
	\arrow["{p}", from=1-1, to=2-1]
	\arrow["{p}", from=1-2, to=2-2]
	\arrow[from=2-1, to=2-2]
\end{tikzcd}\]
By the universal property of $E[p_{Q^-1}]$, to prove that $p$ and $p_Q$ are equivalent on it, it suffices to prove so after restricting to $E$, and since we also have a commutative diagram like the above with $p_Q$, it suffices to actually prove that in $\Mod_Q$, $p: E\to E$ and $p_Q: E\to E$ are homotopic, which is the content of \Cref{lm:subtle}. 
\end{proof}
\section{The bialgebra Frobenius and colimit perfections}\label{section: bialgebra}

Our goal in this section is to provide an alternative proof of the following theorem.

\begin{thm}\label{thm:homology}
Let $M$ be a commutative monoid. Then the map $\tel_p(M)\to \pi_0(\tel_p(M))$ is an $\mathbb F_p$-homology isomorphism. 
\end{thm}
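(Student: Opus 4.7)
The plan is to compute $H_*(\tel_p(M);\mathbb F_p)$ directly as a colimit of the \emph{bialgebra Frobenius}, and show the positive-degree part of this colimit vanishes.

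First, since singular $\mathbb F_p$-homology commutes with filtered colimits of spaces and $\tel_p(M)$ has underlying space the filtered colimit of $M\xrightarrow{p}M\xrightarrow{p}\cdots$, I would identify
\[
\colim\bigl(H_*(M;\mathbb F_p)\xrightarrow{F}H_*(M;\mathbb F_p)\xrightarrow{F}\cdots\bigr)\;\xrightarrow{\;\cong\;}\;H_*(\tel_p(M);\mathbb F_p),
\]
with $F:=p_*$. Since $p\colon M\to M$ factors as $M\xrightarrow{\Delta}M^{\times p}\xrightarrow{\mu}M$, Künneth identifies $F$ with $\mu_*^{(p)}\circ\Delta_*^{(p)}$, i.e.\ the bialgebra Frobenius on the commutative, cocommutative $\mathbb F_p$-bialgebra $H_*(M;\mathbb F_p)$. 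In degree zero, $H_0(M;\mathbb F_p)=\mathbb F_p[\pi_0 M]$ and $F$ acts by $[y]\mapsto[py]$, so the degree-$0$ colimit is $\mathbb F_p[\pi_0(M)[\tfrac{1}{p}]]=H_0(\pi_0\tel_p(M);\mathbb F_p)$, which matches the target. The statement thus reduces to showing the colimit of $F$ vanishes in positive degrees.

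For the positive-degree part, I would decompose $H_*(M;\mathbb F_p)=\bigoplus_{y\in\pi_0 M}H_*(M_y;\mathbb F_p)$ by components, with $F$ sending the $y$-summand to the $py$-summand, and reduce to the local nilpotency statement: for each $y$ and each $x\in\tilde H_n(M_y;\mathbb F_p)$ with $n\geq 1$, some iterate $F^k(x)$ vanishes. Proving this local nilpotency is the main obstacle.

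The plan for the nilpotency is to exploit the constraint that $\Delta_*^{(p)}$ lands in $\Sigma_p$-invariants while $\mu_*^{(p)}$ factors through $\Sigma_p$-coinvariants, so in characteristic $p$ the composite is governed by the vanishing of $\binom{p}{i}\pmod p$ for $0<i<p$. Concretely, for a class $x\in\tilde H_*(M_y;\mathbb F_p)$ primitive with respect to a chosen basepoint $1_y\in M_y$, one computes $\Delta_*^{(p)}(x)=\sum_{i=1}^p 1_y^{\otimes(i-1)}\otimes x\otimes 1_y^{\otimes(p-i)}$, and after $\mu_*^{(p)}$ all $p$ terms coincide, yielding $F(x)=p\cdot\mathrm{translate}_*(x)=0$. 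For a general class I would induct on the coradical filtration of the connected cocommutative coalgebra $H_*(M_y;\mathbb F_p)$: for $x$ in filtration $n$, the non-diagonal terms of $\Delta_*^{(p)}(x)$ modulo lower filtration contribute multinomial coefficients divisible by $p$, so $F(x)$ drops filtration after $\mu_*^{(p)}$. Iterating drives every positive-degree class into the coradical, which is concentrated in degree zero, so it vanishes. Together with the degree-zero computation this yields the claim that $\tel_p(M)\to\pi_0(\tel_p(M))$ is an $\mathbb F_p$-homology isomorphism.
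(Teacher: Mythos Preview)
Your approach is essentially the same as the paper's: identify $H_*(\tel_p(M);\mathbb F_p)$ with the colimit along the bialgebra Frobenius $F=\mu_*^{(p)}\circ\Delta_*^{(p)}$, then show $F$ is locally nilpotent in positive degree by reducing to a primitive computation where $F(x)=p\cdot(\text{translate})_*(x)=0$. The only real difference is the filtration driving the induction: the paper filters by \emph{homological degree} (their ``weakly primitive'' just means primitive modulo lower-degree terms, and they show $\Phi_p^{\circ n}$ kills degree $n$), while you filter by the \emph{coradical filtration} of each connected coalgebra $H_*(M_y;\mathbb F_p)$. Both filtrations have primitively generated associated graded, so the same endgame applies; yours is arguably the more structural choice, theirs the more hands-on one.

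One phrasing issue: your sentence about ``non-diagonal terms of $\Delta_*^{(p)}(x)$ modulo lower filtration contributing multinomial coefficients divisible by $p$'' is not the right mechanism and would confuse a reader. The clean statement is that $F$ is a \emph{coalgebra} morphism $H_*(M_y)\to H_*(M_{py})$ (since both $\Delta$ and $\mu$ are maps of spaces), hence preserves the coradical filtration; on the associated graded, $\bar x$ is genuinely primitive, so $\Delta_*^{(p)}(\bar x)$ has \emph{only} the $p$ diagonal terms, and $\mu_*^{(p)}$ sends each to the same translate, giving $p\cdot(\text{translate})_*(\bar x)=0$. That is what shows $F(C_n)\subset C_{n-1}$; no multinomial coefficients enter. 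With that correction your argument is complete.
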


Combining \Cref{thm:homology} with \Cref{thm:plus}, we get:

\begin{cor}\label{cor: Fp-homol}
    For any commutative monoid $M$ we have
    \begin{gather*}
        H_*(M[\frac{1}{p}];\mathbb F_p) \simeq H_*(\tel_p(M);\mathbb F_p) = 0 \text{  when } * > 0; \\
        H_0(M[\frac{1}{p}];\mathbb F_p) \simeq H_0(\tel_p(M);\mathbb F_p) \simeq \colim (H_0(M;\mathbb F_p) \to H_0(M;\mathbb F_p) \to \dots)
    \end{gather*}
    where the colimit is taken along Frobenius maps $p: M \to M$.
\end{cor}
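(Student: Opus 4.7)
My plan is to work directly with $\mathbb F_p$-homology and exploit the bialgebra structure on $H := H_*(M;\mathbb F_p)$. Since $\mathbb F_p$-homology commutes with filtered colimits,
\[
H_*(\tel_p(M);\mathbb F_p) \;\cong\; \colim_k H_*(M;\mathbb F_p),
\]
with transition maps induced by $p \colon M \to M$. The degree-$0$ part of this colimit equals $\mathbb F_p[\pi_0(\tel_p(M))] = H_0(\pi_0(\tel_p(M));\mathbb F_p)$ and matches the target of the map in the theorem tautologically, so it suffices to show the colimit vanishes in positive degrees.

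The first step is to identify the transition map with the \emph{bialgebra Frobenius}. Since the $p$-fold sum $p \colon M \to M$ factors as $M \xrightarrow{\Delta^{(p)}} M^p \xrightarrow{+^{(p)}} M$, the induced map on $H$ is
\[
\psi^p := \mu^{(p)} \circ \Delta^{(p)} \colon H \longrightarrow H,
\]
and by functoriality $(p_*)^k = \psi^{p^k}$ for every $k \ge 1$, where $H$ carries its graded commutative cocommutative $\mathbb F_p$-bialgebra structure (Pontryagin product $\mu$, diagonal coproduct $\Delta$).

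The core computation is a Frobenius--Verschiebung factorization of $\psi^{p^k}$. Graded cocommutativity makes $\Delta^{(p^k)}(x) \in H^{\otimes p^k}$ invariant under the Koszul-twisted $\Sigma_{p^k}$-action, and $\mu^{(p^k)}$ is constant on $\Sigma_{p^k}$-orbits by graded commutativity. In characteristic $p$, an orbit contributes nontrivially only if its size is prime to $p$, i.e., if its stabilizer contains a Sylow $p$-subgroup $P \le \Sigma_{p^k}$. The Sylow $P$ is the iterated wreath product $(\mathbb Z/p)^{\wr k}$, which acts transitively on $\{1,\dots,p^k\}$; its fixed points in $H^{\otimes p^k}$ are therefore precisely the diagonal tensors $y^{\otimes p^k}$. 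Collecting contributions yields
\[
\psi^{p^k}(x) \;=\; F^k\bigl(V_k(x)\bigr),
\]
where $F \colon H \to H$ is the $p$-th power Frobenius and $V_k(x) \in H_{|x|/p^k}$ records the diagonal content of $\Delta^{(p^k)}(x)$ (with $V_k(x) = 0$ whenever $p^k \nmid |x|$).

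A degree count then finishes the argument: for any $x$ of fixed positive degree $n$, choosing $k$ with $p^k > n$ forces $V_k(x) = 0$, hence $\psi^{p^k}(x) = 0$, and $x$ dies in the colimit. I expect the main technical care needed is the orbit bookkeeping in the third step, particularly the identification of Sylow-fixed tensors in the graded (Koszul-signed) setting; signs only cause trouble for odd-degree classes at odd $p$, and in those cases $y^p = 0$ already by graded commutativity, so the conclusion is unaffected.
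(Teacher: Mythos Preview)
Your argument correctly establishes the $\tel_p(M)$ half of the corollary, namely that $H_*(\tel_p(M);\mathbb F_p)$ vanishes in positive degrees and is the stated colimit in degree $0$. However, the corollary also asserts $H_*(M[\frac{1}{p}];\mathbb F_p)\simeq H_*(\tel_p(M);\mathbb F_p)$, and you never address this. The paper obtains it from \Cref{thm:plus}, which shows $\tel_p(M)\to M[\frac{1}{p}]$ is an integral homology isomorphism. There is also a cheap fix entirely internal to your method: since $M[\frac{1}{p}]$ is itself $p$-perfect, $\tel_p(M[\frac{1}{p}])\simeq M[\frac{1}{p}]$, so applying your orbit argument to the monoid $M[\frac{1}{p}]$ gives $H_{>0}(M[\frac{1}{p}];\mathbb F_p)=0$ directly, and $\pi_0(M[\frac{1}{p}])\cong \pi_0(M)[\frac{1}{p}]$ handles $H_0$.

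On the $\tel_p(M)$ computation, your route is genuinely different from the paper's \Cref{section: bialgebra}. The paper argues by induction on degree using the notion of \emph{weakly primitive} elements: for such $x\in H_n$ one has $\Phi_p(x)=p\alpha^{p-1}x + (\text{terms in the ideal generated by }H_{<n})$, and iterating kills $x$ after $n$ steps. Your argument is instead a structural Frobenius--Verschiebung factorization via Sylow orbit-counting: writing $\Delta^{(p^k)}(x)$ in a homogeneous basis, the $\Sigma_{p^k}$-invariance and graded commutativity make $\psi^{p^k}(x)$ a sum over $\Sigma_{p^k}$-orbits of multi-indices weighted by orbit size, so in characteristic $p$ only orbits with stabilizer containing a Sylow survive; transitivity of the iterated wreath product forces these to be diagonal, giving $\psi^{p^k}(x)=\sum_i c_i\, e_i^{p^k}$ with $|e_i|=|x|/p^k$. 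This is cleaner in that it kills $x$ in a single step once $p^k>|x|$, and it isolates exactly why characteristic $p$ matters; the paper's induction is more elementary and avoids any group-theoretic input. Your remark about Koszul signs is correct: for $p$ odd and $|e_i|$ odd the diagonal coefficient is forced to vanish by an odd transposition, but $e_i^2=0$ already, and for $p=2$ there are no signs.
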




\begin{rmk}
    \Cref{thm:homology} can be deduced from the combination of  \Cref{thm:plus} with \Cref{cor:homologypperf}, and also  from \cite[Corollary~2.10]{ben}. However, we wanted to provide another proof because it has a different flavour: in this section, we make use of the structure of bialgebra on $H_*(M;\mathbb F_p)$, and the fact that $H_*(\tel_p(M);\mathbb F_p)$ can be interpreted in terms of this structure. 
\end{rmk}

\begin{rmk}
    We note that Tyler Lawson gave in \cite{Tyler} a proof that the monoid $Q[p_Q^{-1}]$ from \Cref{section:module} is discrete (and therefore isomorphic to $\mathbb Z$). That proof is closely related to our proof of \Cref{thm:homology}.
\end{rmk}

\begin{cons}
   There is a refinement of $\mathbb F_p$-homology to a symmetric monoidal functor $$H_*(-;\mathbb F_p): \Ss\to \coCAlg^\gr(\mathbb F_p)$$ because $\mathbb F_p$ is a field. In particular, it sends commutative monoids in $\Ss$ to commutative, cocommutative bialgebras in graded $\mathbb F_p$-vector spaces. 
\end{cons}
\begin{defn}
    Let $H$ be a commutative, cocommutative bialgebra in some symmetric monoidal category. It has a \textit{bialgebra Frobenius map} $\Phi_p$ given by $H\xrightarrow{\Delta^{(p)}}H^{\otimes p}\xrightarrow{\mu_p} H$, where $\Delta^{(p)}$, resp. $\mu_p$, is the iterated comultiplication, resp. multiplication. 
\end{defn}
\begin{obs}
    Let $M$ be a commutative monoid. The Frobenius map $p:M\to M$ induces the bialgebra Frobenius map on $H_*(M;\mathbb F_p)$.

    Therefore, we can describe $H_*(\tel_p(M);\mathbb F_p)$ entirely in terms of the bialgebra structure on $H_*(M;\mathbb F_p)$, as a colimit along bialgebra Frobenii. 
\end{obs}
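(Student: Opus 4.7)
The plan is to unwind the definitions; both assertions are formal consequences of the symmetric monoidality of $\mathbb F_p$-homology together with the universal description of the Frobenius in a commutative monoid.

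For the first assertion, the key point is that in any commutative monoid $M$, the Frobenius $p \colon M \to M$ is by construction the $p$-fold sum of the identity, which factors canonically as
\[M \xrightarrow{\Delta^{(p)}} M^{\times p} \xrightarrow{\mu_p} M,\]
where $\Delta^{(p)}$ is the iterated diagonal and $\mu_p$ is the iterated multiplication. Applying the symmetric monoidal functor $H_*(-;\mathbb F_p) \colon \Ss \to \coCAlg^{\gr}(\mathbb F_p)$, the K\"unneth map identifies $H_*(M^{\times p};\mathbb F_p)$ with $H_*(M;\mathbb F_p)^{\otimes p}$; under this identification $H_*(\Delta^{(p)})$ becomes the iterated comultiplication of the coalgebra $H_*(M;\mathbb F_p)$, and $H_*(\mu_p)$ becomes the iterated multiplication of its algebra structure. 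The composite is then precisely the bialgebra Frobenius $\Phi_p$ by definition.

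For the second assertion, we use that filtered colimits in $\CMon$ are computed on underlying spaces (because filtered colimits commute with finite products in $\Ss$), and that $H_*(-;\mathbb F_p)$ preserves filtered colimits of spaces. Combined with the first assertion, this gives a natural equivalence
\[H_*(\tel_p(M);\mathbb F_p) \simeq \colim\bigl(H_*(M;\mathbb F_p) \xrightarrow{\Phi_p} H_*(M;\mathbb F_p) \xrightarrow{\Phi_p} \cdots\bigr),\]
where the colimit may be computed either in $\coCAlg^{\gr}(\mathbb F_p)$ or at the level of underlying graded vector spaces, since the forgetful functor to graded vector spaces preserves filtered colimits.

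The only step requiring any care is the identification of $H_*(\mu_p)$ and $H_*(\Delta^{(p)})$ with the iterated algebra and coalgebra structure maps in a manner compatible with $\Sigma_p$-symmetries; this is built into the construction of the lax symmetric monoidal Eilenberg--Zilber refinement of singular chains, so no extra work is required.
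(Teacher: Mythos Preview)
Your argument is correct and is precisely the unwinding of definitions that the paper leaves implicit: the observation is stated without proof there, and your explanation via the factorization $M\xrightarrow{\Delta^{(p)}}M^{\times p}\xrightarrow{\mu_p}M$ together with symmetric monoidality of $H_*(-;\mathbb F_p)$ and preservation of filtered colimits is exactly what is intended.
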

\begin{nota}
    From now on, all our homology will be with coefficients in $\mathbb F_p$. 
\end{nota}

To prove the theorem, we will need a simple observation about the comultiplication of $H$. 

\begin{obs}
Let $X$ be a space. $H_*(X)$ is a graded cocommutative coalebra. Its degree $0$ part is generated by grouplike elements, that is, elements $x$ such that $\Delta(x)= x\otimes x$, namely, they are the generators corresponding to connected components of $x$.  

Furthermore, one can prove relatively easily using the Künneth formula that in the decomposition $H_n(X\times X)\cong \bigoplus_{p+q= n}H_p(X)\otimes H_q(X)$, for $x\in H_n(C) \subset \bigoplus_{D\in\pi_0(X)}H_n(D)$, we have \[\Delta(x) = x\otimes [C] + [C]\otimes x+ \text{terms in } H_p(X)\otimes H_q(X), \, p,q<n,\] where $[C]\in H_0(X)$ is the corresponding generator. 
\end{obs}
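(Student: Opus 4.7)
The plan is to split the observation into its two independent parts --- the grouplike characterization in degree $0$ and the formula for $\Delta(x)$ in positive degrees --- both of which reduce immediately to the case of a connected $X$ via the decomposition $X \simeq \coprod_{C \in \pi_0(X)} C$. Since $\mathbb F_p$-homology is additive on disjoint unions and the Künneth map is a coalgebra isomorphism over a field, we obtain $H_*(X \times X) \cong \bigoplus_{C, D \in \pi_0(X)} H_*(C) \otimes H_*(D)$, and the purely geometric observation that $\Delta_X$ sends $C$ into $C \times C$ implies that for $x \in H_n(C)$ the image $\Delta(x)$ lies entirely in the summand $H_*(C \times C) \cong \bigoplus_{p+q=n} H_p(C) \otimes H_q(C)$.

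Having localized to one component, I would extract the $(n,0)$ and $(0,n)$ terms by invoking the counit axiom. Since $C$ is path-connected, the augmentation $\epsilon_C \colon H_*(C) \to \mathbb F_p$ is zero outside of $H_0(C) = \mathbb F_p\cdot[C]$. Applying $\id \otimes \epsilon_C$ to the counit identity $(\id \otimes \epsilon_C) \circ \Delta_C = \id$ kills every summand of $\Delta(x)$ except $H_n(C) \otimes H_0(C)$, whose contribution must therefore equal $x \otimes [C]$. Cocommutativity (or equivalently the dual counit axiom $(\epsilon_C \otimes \id)\circ \Delta_C = \id$) yields the symmetric statement $[C] \otimes x$ for the $(0,n)$-term, and all remaining terms then lie in summands with $0 < p, q < n$.

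For the degree-$0$ statement, the diagonal acts on any point $c \in C$ as $c \mapsto (c,c)$, so $\Delta([C]) = [C] \otimes [C]$ and each component generator is grouplike. Conversely, writing $x = \sum_C \lambda_C [C] \in H_0(X)$, the equation $\Delta(x) = x \otimes x$ reduces to $\lambda_C^2 = \lambda_C$ together with $\lambda_C \lambda_D = 0$ for $C \neq D$, which over $\mathbb F_p$ forces $x$ to be either $0$ or a single $[C]$. In particular $H_0(X)$ is spanned by grouplike elements.

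No substantial obstacle is expected: the entire observation is bookkeeping around Künneth and the counit axiom applied componentwise, which is presumably why the authors state it informally rather than as a labelled proposition.
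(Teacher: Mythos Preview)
Your argument is correct and is exactly the natural fleshing-out of what the paper leaves implicit: the observation is stated without proof beyond the hint ``using the K\"unneth formula,'' and your componentwise reduction together with the counit axiom is the standard way to make that hint precise. The only remark is that the converse you prove in degree~$0$ (that every grouplike element is either $0$ or a single $[C]$) is slightly more than the statement asks for, but this does no harm.
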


We now axiomatize this in the following definition: 
\begin{defn}
    Let $H_*$ be a graded commutative and cocommutative bialgebra. An element $x\in H_n$ is called \textit{weakly primitive} if $\Delta(x) = x\otimes \alpha + \alpha\otimes x+$ terms in $H_p\otimes H_q, p,q<n$ where $\alpha$ is a grouplike element in $H_0$, i.e., $\Delta(\alpha) = \alpha\otimes\alpha$. 
\end{defn}
\begin{ex}\label{ex:weakprim}
    We have essentially argued that the homogeneous elements of $H_*(M)$ that are ``$\pi_0(M)$-homogeneous'' are weakly primitive whenever $M$ is a commutative monoid. 
\end{ex}

Our key ingredient is the following proposition.
\begin{prop}\label{prop: weakly prim}
    Let $H_*$ be a graded commutative and cocommutative bialgebra, and let $x\in H_n, n\geq 1$ be weakly primitive. In this case, for any $m\geq 1, \Phi_m(x)= m\alpha^{m-1}x +$ terms in the ideal generated by the $H_p$'s, $0<p<n$. 
\end{prop}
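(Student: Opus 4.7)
I would argue by induction on $m$ using the recursive identity
\[ \Phi_{m+1} = \mu \circ (\Phi_m \otimes \id) \circ \Delta, \]
which follows immediately from coassociativity of $\Delta$ and the factorization $\mu_{m+1} = \mu \circ (\mu_m \otimes \id)$. The base case $m = 1$ is trivial because $\Phi_1 = \id$ and $1\cdot \alpha^0\cdot x = x$.

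For the inductive step, let $I$ denote the ideal of $H_*$ generated by $\bigoplus_{0 < p < n} H_p$, and assume that $\Phi_m(x) = m\alpha^{m-1}x + r$ with $r \in I$. Substituting the weak primitivity decomposition $\Delta(x) = x \otimes \alpha + \alpha \otimes x + w$ into the recursive formula gives
\[ \Phi_{m+1}(x) = \Phi_m(x) \cdot \alpha \,+\, \Phi_m(\alpha) \cdot x \,+\, \mu \circ (\Phi_m \otimes \id)(w). \]
Since $\alpha$ is grouplike, $\Delta^{(m)}(\alpha) = \alpha^{\otimes m}$ and hence $\Phi_m(\alpha) = \alpha^m$. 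For the last summand, each pure tensor in $w$ has its right-hand factor in some $H_q$ with $0 < q < n$, so after applying $\Phi_m \otimes \id$ and multiplying, the result still has a factor in $H_q$ and therefore lies in $I$. Using graded commutativity of $H_*$ (and the fact that $|\alpha|=0$ eliminates any potential Koszul sign) to rewrite $\alpha^{m-1}x\cdot \alpha = \alpha^m x$ and $r \cdot \alpha \in I$, I would conclude
\[ \Phi_{m+1}(x) \equiv m\alpha^m x + \alpha^m x = (m+1)\alpha^m x \pmod{I}, \]
completing the induction.

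The main obstacle is purely a matter of bookkeeping error terms: one must verify that at every step the ``junk'' produced from the middle piece $w$ of $\Delta(x)$, together with the inductive error $r \cdot \alpha$, genuinely lands in $I$. This is ensured by the structural feature of weak primitivity that \emph{both} tensor factors in $w$ have degree strictly between $0$ and $n$, so that one of them survives multiplication as an element of $H_q$ with $0 < q < n$. Beyond this, the proof uses nothing more than coassociativity, the grouplike condition on $\alpha$, and commutativity of $\mu$.
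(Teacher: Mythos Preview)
Your proof is correct and follows essentially the same inductive strategy as the paper's: both arguments induct on $m$, feed in the weak primitivity decomposition of $\Delta(x)$, use that $\alpha$ is grouplike, and track that the ``middle'' terms land in the ideal $I$. The only organizational difference is that the paper first proves the stronger statement $\Delta^{(m)}(x) = \sum_i t_i^m(\alpha,x) + (\text{terms with a factor in degree }0<p<n)$ and then applies $\mu_m$ once at the end, whereas you interleave multiplication into the induction via the recursion $\Phi_{m+1} = \mu \circ (\Phi_m \otimes \id) \circ \Delta$; the content is the same.
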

\begin{proof}
    We prove a stronger statement on $\Delta^{(m)}(x)\in H^{\otimes m}$ by induction. Namely, let $t_i^m(\alpha,x)$ denote the tensor $\alpha\otimes ... \otimes x\otimes ... \otimes \alpha$ with $m$ terms and a single $x$ in position $i$. We have \[\Delta^{(m)}(x)= \sum_i t^m_i(\alpha,x) +\textnormal{ terms where at least one tensor factor is in degree } 0<p<n.\]
    Given how $\Delta^{(m)}$ is defined, we can simply prove this by induction. We note that for $i>1$, $\Delta$ applied to the first slot of $t_i^m(\alpha,x)$ is simply $t^{m+1}_{i+1}(\alpha,x)$ because $\alpha$ is grouplike, while for $i=1$, $\Delta$ applied to the first slot of $t_1^m(\alpha,x)= x\otimes \alpha...\otimes \alpha$ is $t_1^{m+1}(\alpha,x)+t_2^{m+1}(\alpha,x)+$some terms containing at least one tensor factor of degree $<n$, and in total this proves the induction step, the case $m=1$ being our assumption. 

Multiplying terms together proves the proposition, since $\alpha$ is in degree $0$ and therefore strictly commutes with $x$ so that $\mu_m(t_i^m(\alpha,x))= \alpha^{m-1}x$. 
\end{proof}
\begin{cor}\label{cor:weakprim}
     Let $H_*$ be a non-negatively graded commutative and cocommutative bialgebra over $\mathbb F_p$, and suppose that each $H_n, n\geq 1$ is generated under sums by weakly primitive elements. In this case, for every $x\in H_n$, $\Phi_p^{\circ n}(x)=\Phi_{p^n}(x) = 0$. 

     As a consequence, the colimit of $H\xrightarrow{\Phi_p}H\xrightarrow{\Phi_p}H\to \dots$ is concentrated in degree $0$. 
\end{cor}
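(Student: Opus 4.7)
The plan is to exhibit a decreasing filtration on $H$ by powers of the augmentation ideal $H_{>0} = \bigoplus_{n \geq 1} H_n$ and show that $\Phi_p$ roughly squares the filtration degree — concretely, $\Phi_p(H_{>0}) \subseteq (H_{>0})^2$. Iterating $n$ times will then push elements past total degree $n$ and force vanishing for purely numerical reasons. To set up the iteration, I would first record two structural facts about $\Phi_p = \mu_p \circ \Delta^{(p)}$: it is $\mathbb F_p$-linear (being a composition of linear maps), and it is a graded algebra homomorphism. The latter is a classical consequence of $H$ being graded commutative and cocommutative; it is exactly the place where both hypotheses on $H$ are used, since algebra-compatibility of $\mu_p: H^{\otimes p} \to H$ requires commutativity (to permute factors) and coalgebra-compatibility of $\Delta^{(p)}$ requires cocommutativity.

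The key inclusion $\Phi_p(H_{>0}) \subseteq (H_{>0})^2$ reduces, by linearity and the hypothesis on $H$, to checking it on a weakly primitive $x \in H_n$ with $n \geq 1$. Proposition \ref{prop: weakly prim} with $m = p$ gives $\Phi_p(x) = p\alpha^{p-1}x + y$ where $y$ lies in the ideal generated by $\bigcup_{0 < q < n} H_q$. Since $p = 0$ in $\mathbb F_p$, we have $\Phi_p(x) = y$. Now a homogeneous degree-$n$ element of this ideal is a sum of products $a \cdot b$ with $b \in H_q$ for some $0 < q < n$ and $a \in H_{n-q}$; since $0 < n-q$, both factors lie in $H_{>0}$, and so $\Phi_p(x) \in (H_{>0})^2$ as desired.

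With the algebra-homomorphism property of $\Phi_p$ in hand, I get $\Phi_p((H_{>0})^k) \subseteq (\Phi_p(H_{>0}))^k \subseteq (H_{>0})^{2k}$, so inductively $\Phi_p^{\circ k}(H_{>0}) \subseteq (H_{>0})^{2^k}$. For $x \in H_n$ with $n \geq 1$, this places $\Phi_p^{\circ n}(x)$ in $H_n \cap (H_{>0})^{2^n}$; but any element of $(H_{>0})^{2^n}$ is a sum of $2^n$-fold products of positive-degree elements and hence has total degree at least $2^n > n$, so this intersection is zero. For the colimit statement, graded filtered colimits are computed degreewise, and in every degree $n \geq 1$ the map $\Phi_p$ eventually annihilates each element (by the $n$-th iterate), so the colimit is concentrated in degree $0$.

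The main obstacle I anticipate is the need for $\Phi_p$ to be a genuine algebra homomorphism rather than merely linear: this is what underwrites the passage from $\Phi_p(H_{>0}) \subseteq (H_{>0})^2$ to $\Phi_p^{\circ k}(H_{>0}) \subseteq (H_{>0})^{2^k}$. Without multiplicativity one could apply Proposition \ref{prop: weakly prim} directly with $m = p^n$ to get $\Phi_{p^n}(x) \in (H_{>0})^2$, which is far weaker than vanishing; the power-of-ideal bookkeeping combined with multiplicativity of $\Phi_p$ is exactly what turns the $\Phi_p$-estimate into an unboundedly growing estimate and makes the degree count bite.
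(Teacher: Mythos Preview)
Your argument is correct and uses the same two ingredients as the paper's proof: \Cref{prop: weakly prim} (specialized to $m=p$, so that the leading term $p\alpha^{p-1}x$ dies) and the fact that $\Phi_p$ is a graded algebra endomorphism. The paper organizes the same computation as a strong induction on the degree $n$---after one application of $\Phi_p$, a weakly primitive $x\in H_n$ lands in the ideal generated by $H_q$ with $0<q<n$, and the induction hypothesis together with multiplicativity of $\Phi_p^{\circ(n-1)}$ kills each generator---whereas you repackage this as the single inclusion $\Phi_p(H_{>0})\subseteq (H_{>0})^2$ and iterate through the filtration by powers of the augmentation ideal. Your version is slightly sharper: it shows $\Phi_p^{\circ k}(H_n)=0$ as soon as $2^k>n$, not just for $k=n$. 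One small quibble: the multiplicativity of $\Phi_p$ only needs commutativity of $H$ (the iterated comultiplication $\Delta^{(p)}$ is an algebra map by the bialgebra axiom alone); cocommutativity is not what makes $\Phi_p$ multiplicative.
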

\begin{proof}
    We prove this by induction on $n\geq 1$. For $x\in H_1$ weakly primitive, we find that the previous proposition specializes to $\Phi_p(x)= p\alpha^{p-1}x=0$, as $p=0$. 

    Now assume the statement has been proved for $m<n$, and let $x\in H_n$ be weakly primitive. It suffices to prove that $\Phi_p^{\circ n}(x) =0$, since such $x$ generate $H_n$ under sums. By \Cref{prop: weakly prim}, $\Phi_p(x)= p\alpha^{p-1}x +$ terms in the ideal generated by $H_p,0<p<n$. Since $p=0$ in $\mathbb F_p$, we may apply the induction hypothesis and the fact that $\Phi_p^{\circ n-1}$ is an algebra morphism to conclude that $\Phi_p^{\circ n}(x) = 0$, as was to be shown. 

    The claim about the colimit follows at once. 
\end{proof}
\begin{proof}[Proof of \Cref{thm:homology}]
    By \Cref{ex:weakprim}, if $M$ is a commutative monoid, $H_*(M;\mathbb F_p)$ fits into the hypotheses of \Cref{cor:weakprim}, and $H_*(\tel_p(M))$ is the colimit of $H_*(M)$ along its bialgebra Frobenius, and so \Cref{cor:weakprim} guarantees that it is concentrated in degree $0$, from which the claim follows. 

\end{proof}

\bibliographystyle{alphamod}
\let\mathbb=\mathbf
{\small
\bibliography{Biblio}
}

\end{document}